\def\N{\mathbb N}
\def\Z{\mathbb Z}
\def\id{\mathbf 1}
\def\D{\mathcal D}
\def\K{\mathcal K}
\def\F{\mathcal F}
\def\T{\mathcal T}
\def\U{\mathcal U}
\def\V{\mathcal V}
\def\m{\mathfrak m}
\def\p{\mathfrak p}
\def\q{\mathfrak q}
\def\R{\mathrm{\bf R}}
\def\depth{\mathrm{depth}}
\def\Im{\mathrm{Im}}
\def\Ker{\mathrm{Ker}}
\def\Hom{\mathrm{Hom}}
\def\Ext{\mathrm{Ext}}
\def\Min{\mathrm{Min}}
\def\Spec{\mathrm{Spec}}
\def\Supp{\mathrm{Supp}}
\def\Ass{\mathrm{Ass}}
\def\RMod{R\text{-}\mathrm{Mod}}
\def\Sec{\mathbb{S}}
\def\ALC{\mathbb{A}}
\def\sp{\mathrm{sp}}
\def\Inj{\mathrm{Inj}}
\def\SSpec{\mathbb{S}\mathrm{pec}}
\def\WW{\mathbb{W}}
\def\G{\varGamma }
\theoremstyle{plain} 
\newtheorem{thm}{\textbf Theorem}[section]
\newtheorem{lem}[thm]{\textbf Lemma}
\newtheorem{cor}[thm]{\textbf Corollary}
\newtheorem{prop}[thm]{\textbf Proposition}
\theoremstyle{definition}
\newtheorem{df}[thm]{\textbf Definition}
\newtheorem{rem}[thm]{\textbf Remark}
\newtheorem{exm}[thm]{\textbf Example}
\begin{document}
\title[Abstract local cohomology functors]{Abstract local cohomology functors}
\author{Yuji Yoshino, Takeshi Yoshizawa}
\email{}
\thanks{}

\begin{abstract}
We propose to define the notion of abstract local cohomology functors. 
The ordinary local cohomology functor $\R\G_I$  with support in the closed subset defined by an ideal $I$  and the generalized local cohomology functor  $\R \G_{I,J}$  defined in \cite{TYY}  are characterized as elements of the set of all the abstract local cohomology functors. 
\end{abstract}

\maketitle
\section*{Introduction}

Let  $R$  be a commutative noetherian ring and $I$  be an ideal of  $R$. 
We denote the category of all $R$-modules by  $\RMod$ and also denote the derived category consisting of all left bounded complexes of  $R$-modules  by  $\D^+(\RMod)$. 
Then the section functor  $\G_I : \RMod \to \RMod$  and its derived functor  $\R\G_I : \D^+(\RMod) \to \D^+(\RMod)$ (called the local cohomology functor)  are basic tools not only for the theory of commutative algebras but also for algebraic geometry. 
They are actually extensively studied by many authors. 
See, for example,  \cite{BS}, \cite{BH}, \cite{G} and  \cite{H}. 

To give a way of generalizing such classical local cohomology functors, the authors have introduced, together with Ryo Takahashi in the paper \cite{TYY},  the generalized section functor  $\G_{I, J} : \RMod \to \RMod$  and the generalized local cohomology functor $\R\G_{I, J} : \D^+(\RMod) \to \D^+(\RMod)$  associated with a given pair of ideals  $I, J$. 
The aim of this paper is to characterize these functors among the set of functors, and show how naturally the functors  $\G_{I, J}$  and  $\R \G_{I, J}$  appear in the context of functors.

Our strategy is the following.

As for the section functors  $\G_I$  and  $\G_{I, J}$, we consider the set  $\Sec (R)$  of all the left exact radical functors on  $\RMod$. 
Actually, $\G_I$  and   $\G_{I, J}$  are elements of  $\Sec (R)$. 
A radical functor, or more generally a preradical functor, has its own long history in the theory of categories and functors. 
See \cite{G-1969} or \cite{M-1964} for the case of module category. 
One of the most useful and important facts is that there is a bijective correspondence between $\Sec (R)$  and the set of hereditary torsion theories for  $\RMod$ (\cite[Chapter VI, Proposition 3.1]{S-1975}). 
In this paper, after giving some characterizations of elements of  $\Sec (R)$, we shall show that  $\Sec (R)$  is a complete lattice, and we can define a product and a quotient for a couple of elements of  $\Sec (R)$.
As a consequence, we shall prove that a left exact radical functor  $\gamma$  is of the form  $\G _I$  for an ideal  $I$  of  $R$  if and only if  $\gamma$  satisfies a kind of ascending chain condition inside  the set  $\Sec (R)$  (Theorem \ref{ACC}). 
Moreover we also prove that  $\G_{I, J}$  is nothing but a quotient of $\G_I$  by  $\G_J$ (Theorem \ref{thm-W(I,J)}).

As for the derived functors  $\R\G_I$  and  $\R\G_{I, J}$, 
we consider the set of isomorphism classes of abstract local cohomology functors, which we shall define in Definition \ref{df-ALC}. 
We say a triangle functor  $\delta : \D^+ (\RMod) \to \D^+ (\RMod)$ is an abstract local cohomology functor if it defines a stable t-structure on $\D^+ (\RMod)$  which divides indecomposable injective $R$-modules. 
(See Definition \ref{df-ALC} for the precise meaning.) 
Actually  $\R\G_I$  and  $\R\G _{I, J}$  are abstract local cohomology functors.
We note here that the notion of t-structure was introduced and studied first in the paper \cite{BBD-1982}, but what we need in this paper is the notion of stable t-structure introduced by Miyachi \cite{Mi-1991}. 
We denote by  $\ALC (R)$  the set of all the isomorphism classes of abstract local cohomology functors on  $\D^+ (\RMod)$. 
We shall show that  $\ALC (R)$  bijectively corresponds to the set of specialization-closed subsets of  $\Spec (R)$. 
In fact, we prove in Theorem \ref{thm-ALC} that each abstract local cohomology functor is of the form  $\R\G_W$  with  $W$  being a specialization-closed subset of  $\Spec (R)$. 
After these observation,  we define a product and a quotient for a couple of elements of  $\ALC (R)$, in section 3. 
Finally we shall prove  that the functor  $\R\G_I$  is characterized as an element of  $\ALC (R)$  which satisfies a kind of ascending chain condition (Theorem \ref{ACC for ALC}). 
Moreover,  $\R\G_{I, J}$  is a quotient of $\R\G_I$  by  $\R\G_J$ in $\ALC (R)$  (Theorem \ref{thm-W(I,J) for ALC}).


The organization of the paper is the following.

In section 1, we recall some basic concepts and properties from the theory of functors and the torsion theory, and we give the definition of abstract local cohomology functors  (Definition \ref{df-ALC}).  
Since Miyachi's results \cite{Mi-1991} concerning stable t-structure is essential for this definition, we include the precise statement and a rough proof of Miyachi's Theorem in section 1 (Theorem \ref{Miyachi}).

In section 2, 
we observe some necessary and sufficient conditions for a functor to be left exact radical functor (Theorem \ref{eq-thm}) and prove that an abstract local cohomology functor is always a derived functor of a section functor with support in a specialization-closed subset (Theorem \ref{thm-ALC}). 

In section 3, 
we define the closure operation for preradical functor in the set of left exact radical functors (Definition \ref{def-closure}), and define the quotient in  $\Sec (R)$  and  $\ALC (R)$ as mentioned above. 

In section 4, 
we give characterization of the section functors $\G_{I}$ and  $\G_{I, J}$ as elements of  $\Sec (R)$, respectively in Theorem \ref{ACC}  and  Theorem \ref{thm-W(I,J)}. 
We also characterize the derived functors  $\R\G_{I}$ and  $\R\G_{I, J}$  as elements of  $\ALC (R)$, respectively in Theorem \ref{ACC for ALC} and Theorem \ref{thm-W(I,J) for ALC}.

\section{Preliminaries on functors and the definition of abstract local cohomology functors}

Throughout the paper, $R$ always denotes a commutative noetherian ring, 
and  $\RMod$  denotes the category consisting of all $R$-modules and $R$-module homomorphisms. 

In the first half of this section, we are interested in covariant functors from  $\RMod$  to itself.   
Let  $\gamma _1$  and  $\gamma _2$  be such functors. 
Recall that  $\gamma _1$  is said to be a subfunctor of  $\gamma _2$, denoted by  $\gamma _1 \subseteq  \gamma _2$,  if  $\gamma _1 (M)$  is a submodule of  $\gamma _2 (M)$  for all $M \in \RMod$  and if  $\gamma _1 (f)$  is a restriction of  $\gamma _2 (f)$  to $\gamma _1 (M)$  for all $f \in \Hom _R (M, N)$. 
Let  $\id$  denote the identity functor on $\RMod$. 
Note from the definition that if  $\gamma \subseteq \id$, then  $\gamma (M)$  is a submodule of  $M$  for all $M \in \RMod$  and  $\gamma (f)$  is a restriction of $f$  onto $\gamma (M)$  for all $f \in \Hom _R (M, N)$. 
First of all we shall make several remarks about subfunctors of  $\id$.

\begin{rem}\label{additive}
\vspace{6pt}
\noindent
(\rm 1)\ 
If  $\gamma$  is a subfunctor of  $\id$, then  $\gamma$  is an additive $R$-linear functor from  $\RMod$  to $\RMod$. 

In fact, the mapping $\Hom _R (M, N) \to \Hom _R (\gamma (M), \gamma (N))$, which is induced by $\gamma$,   maps  $f$  to its restriction  $f |_{\gamma(M)}$ as explained above. 
It is obvious that the restriction mapping is additive and $R$-linear.

\vspace{6pt}
\noindent
(\rm 2)\ 
If $\gamma _1$  and $\gamma _2$  are subfunctors of  $\id$, then their composition functor $\gamma _1 \cdot \gamma _2$  is also a subfunctor of  $\id$. 

In fact,  for an $R$-module $M$, since  $\gamma _2 (M) \subseteq M$, we have  
$\gamma _1 \cdot \gamma _2 (M) \subseteq \gamma _2 (M) \subseteq  M$. 
If  $f \in \Hom _R (M, N)$, then it is easily seen that 
$\gamma _1 \cdot \gamma _2 (f) = f|_{\gamma_1 \cdot \gamma _2 (M)}$.
It therefore follows  $\gamma _1\cdot \gamma _2 \subseteq \id$. 
\end{rem}

The following observations will be used later in this paper.

\begin{lem}\label{subfunctor}
Let  $\gamma$, $\gamma _1$ and $\gamma _2$  be subfunctors of  $\id$  and assume that they are left exact functors on  $\RMod$. 

\begin{itemize}
\item[{(\rm 1)}]\ 
If  $N$ is an $R$-submodule of  $M$, then the equality  $\gamma (N) = N \cap \gamma (M)$  holds.

\item[{(\rm 2)}]\ 
For all $R$-module  $M$, we have  $\gamma_1 \cdot \gamma _2 (M) = \gamma_1 (M) \cap \gamma _2 (M) = \gamma _2 \cdot \gamma _1(M)$. 
In particular, the equality  $\gamma _1 \cdot \gamma _2 = \gamma _2 \cdot \gamma_1$  holds.

\item[{(\rm 3)}]\ 
The idempotent property holds for $\gamma$, i.e. $\gamma ^2 = \gamma$.

\item[{(\rm 4)}]\ 
If  $\gamma_1$  is isomorphic to $\gamma _2$  as functors on  $\RMod$, then 
$\gamma_1$ is identical with  $\gamma _2$  as subfunctors of  $\id$, i.e. $\gamma _1 \cong \gamma _2$  implies $\gamma _1 = \gamma _2$. 
\end{itemize}
\end{lem}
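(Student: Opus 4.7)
The plan is to establish the four statements in the order (1), (3), (2), (4), leaning throughout on the observation recorded in Remark \ref{additive}: for any subfunctor $\gamma \subseteq \id$ and any $R$-linear map $f$, the morphism $\gamma(f)$ is literally the restriction of $f$ to $\gamma(M) \subseteq M$, so that $f(\gamma(M)) \subseteq \gamma(N)$.

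For (1), I would apply $\gamma$ to the short exact sequence $0 \to N \to M \to M/N \to 0$. Left exactness of $\gamma$ yields that $\gamma(N)$ is the kernel of $\gamma(\pi): \gamma(M) \to \gamma(M/N)$, where $\pi: M \to M/N$ is the canonical projection. By the restriction principle above, $\gamma(\pi)$ is just $\pi|_{\gamma(M)}$, so its kernel inside $\gamma(M)$ consists exactly of those $x \in \gamma(M)$ with $x \in N$; that is, $N \cap \gamma(M)$. Statement (3) is then immediate: specialize (1) to the submodule $\gamma(M) \subseteq M$ to obtain $\gamma(\gamma(M)) = \gamma(M) \cap \gamma(M) = \gamma(M)$.

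For (2), I would apply (1) to the submodule $\gamma_2(M) \subseteq M$ with the functor $\gamma_1$, yielding $\gamma_1(\gamma_2(M)) = \gamma_2(M) \cap \gamma_1(M)$; swapping the roles of $\gamma_1, \gamma_2$ gives the symmetric equality. The identification $\gamma_1 \cdot \gamma_2 = \gamma_2 \cdot \gamma_1$ as \emph{functors} (not merely pointwise on objects) is then automatic, because on any morphism $f$ both composites agree with the restriction of $f$ to the common submodule $\gamma_1(M) \cap \gamma_2(M)$.

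The step I expect to require the most care is (4). Given a natural isomorphism $\phi: \gamma_1 \to \gamma_2$ and a fixed module $M$, I would apply naturality of $\phi$ to the inclusion $\iota_M: \gamma_1(M) \hookrightarrow M$. Using the idempotency established in (3), the top arrow $\gamma_1(\iota_M)$ of the naturality square becomes the identity on $\gamma_1(M)$; using (2), the bottom arrow $\gamma_2(\iota_M)$ becomes the inclusion $\gamma_2(\gamma_1(M)) = \gamma_1(M) \cap \gamma_2(M) \hookrightarrow \gamma_2(M)$. Commutativity of the square therefore forces the image of $\phi_M$ to be contained in $\gamma_1(M) \cap \gamma_2(M)$; since $\phi_M$ is an isomorphism onto $\gamma_2(M)$, we conclude $\gamma_2(M) \subseteq \gamma_1(M)$. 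The symmetric argument applied to $\phi^{-1}$ gives the reverse inclusion, so $\gamma_1(M) = \gamma_2(M)$ as submodules of $M$. Because both $\gamma_1(f)$ and $\gamma_2(f)$ are restrictions of $f$ to this common submodule, they coincide on morphisms as well, and $\gamma_1 = \gamma_2$ as subfunctors of $\id$.
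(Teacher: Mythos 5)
Your proposal is correct and follows essentially the same route as the paper: (1) via left exactness applied to $0 \to N \to M \to M/N \to 0$ together with the fact that $\gamma(\pi)$ is a restriction of $\pi$, then (2) and (3) as immediate consequences, and (4) by a commutative diagram that forces $\gamma_2(M) \subseteq \gamma_1(M) \cap \gamma_2(M)$ followed by the symmetric argument with $\phi^{-1}$. The only cosmetic difference is in (4), where you invoke the naturality square of $\phi$ at the inclusion $\gamma_1(M) \hookrightarrow M$ while the paper applies $\gamma_1$ directly to the morphism $\phi(M)$; both rest on the same ingredients (idempotence, the restriction property, and surjectivity of $\phi_M$).
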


\begin{proof}
$(\rm 1)$ 
The equality  $\gamma (N) = N \cap \gamma (M)$  easily follows from the following commutative diagram with exact rows.
\[ \begin{CD}
0 @>>>  N     @>>> M      @>>>  M/N      @>>> 0 \\
@.      @AAA       @AAA         @AAA\\ 
0 @>>> \gamma (N) @>>> \gamma (M) @>>> \gamma (M/N)\\
\end{CD}\]

$(\rm 2)$ 
Applying  the functor $\gamma _1 $  to a submodule  $\gamma _2 (M) \subseteq M$  and using (1), we have  $\gamma_1 \cdot \gamma _2 (M) = \gamma_1 (M) \cap \gamma _2 (M)$. 
Similarly  $\gamma _2 \cdot \gamma _1(M) = \gamma _2 (M) \cap \gamma _1(M)$. 
Therefore  $\gamma _1\cdot \gamma _2 (M) = \gamma _2 \cdot \gamma _1(M)$  holds for all $M \in \RMod$, hence we have  $\gamma _1 \cdot \gamma _2  = \gamma _2 \cdot \gamma _1$.

$(\rm 3)$ 
Apply the result of $(2)$  and we see that  $\gamma ^2 (M) = \gamma \cdot \gamma (M) = \gamma (M) \cap \gamma (M) = \gamma (M)$  for all $M \in \RMod$. 
Hence  $\gamma ^2 = \gamma$.

$(\rm 4)$ 
Suppose  $\phi : \gamma _1 \to \gamma _2$  is an isomorphism of functors. 
Then, $\phi (M) : \gamma _1 (M) \to \gamma _2 (M)$  is an isomorphism of $R$-modules for any $R$-module $M$. 
Applying the functor $\gamma _1$  to this $R$-module homomorphism, we have the following commutative diagram. 
\[ \begin{CD}
\gamma _1 (M) @>{\phi (M)}>>  \gamma _2 (M) \\
  @A{\bigcup |}AA                     @A{\bigcup |}AA        \\ 
\gamma _1^2 (M) @>{\gamma _1 (\phi(M))}>> \gamma_1\cdot \gamma _2 (M),  \\
\end{CD}\]
where the left vertical arrow is an equality by (3). 
Thus it follows that  $\gamma _2 (M) = \gamma_1 \cdot \gamma _2 (M)$  for all $M \in \RMod$, hence  $\gamma _2 = \gamma _1\cdot \gamma _2$ as functors. 
Considering  $\phi ^{-1}$, we can show that  $\gamma _1= \gamma _2 \cdot \gamma _1$ as well. 
Since  $\gamma _1 \cdot \gamma _2 = \gamma _2 \cdot \gamma _1$ as we have shown in $(2)$, we have  $\gamma _1 = \gamma _2$  as desired. 
\end{proof}


Let us recall some definitions for functors from the theory of categories.

\begin{df}
Let $\gamma$ be a functor  $\RMod \to \RMod$.
\begin{itemize}  
\item[{(\rm 1)}]\ A functor $\gamma$ is called a preradical functor 
if $\gamma$ is a subfunctor of $\id$.

\item[{(\rm 2)}]\ A preradical functor $\gamma$ is called a radical functor 
if $\gamma(M/\gamma(M))=0$ for every $R$-module $M$.

\item[{(\rm 3)}]\ A functor $\gamma$ is said to preserve injectivity if $\gamma (I)$ is an injective $R$-module whenever $I$ is an injective $R$-module.
\end{itemize}
\end{df}

We should remark that a left exact radical functor is sometimes called a torsion radical or an idempotent kernel functor, which depends on the authors.  
(E.g. O.\ Goldman \cite{G-1969}, J.\ Lambek \cite{L-1971}).

\begin{exm}\label{exm-LC}
Let  $W$  be a subset of $\Spec (R)$. 
Recall that  $W$  is said to be closed under specialization (or specialization-closed) if  $\p \in W$  and  $\p \subseteq \q \in \Spec (R)$  imply  $\q \in W$. 

When $W$  is closed under specialization, we can define the section functor 
$\G _W$  with support in  $W$ as 
$$
\G _W  (M) = \{ x \in M \ | \ \Supp (Rx) \subseteq W \},  
$$
for all $M \in \RMod$.
Then it is easy to see that  $\G_W$ is a left exact radical functor that preserves injectivity.
\end{exm}

\vspace{12pt}

For the later use we need the notion of torsion theory. 
See \cite{S-1971} or \cite{S-1975} for the detail of the torsion theory.

\begin{df}
A torsion theory for $\RMod$ is a pair $(\T,\F)$ of classes of $R$-modules satisfying the following conditions: 
\begin{itemize}
\item[{(\rm 1)}]\ $\Hom_R (\T, \F)=0$.  
\item[{(\rm 2)}]\ If $\Hom_R(M,\F)=0$, then $M \in \T$.
\item[{(\rm 3)}]\ If $\Hom_R(\T,M)=0$, then $M \in \F$.
\end{itemize}
A torsion theory $(\T, \F)$ for $\RMod$  is called hereditary if $\T$ is closed under submodules.
\end{df}

\begin{rem}
It is easily observed that the following hold for a torsion theory $(\T, \F)$ for  $\RMod$. 
(Cf. \cite{S-1971} or \cite{S-1975}.) 

\begin{itemize}\label{rem on torsion}
\item[{(\rm 1)}]\ 
$\T$ is closed under quotient modules, direct sums and extensions.
\item[{(\rm 2)}]\ 
$\F$ is closed under submodules, direct products and extensions. 
\item[{(\rm 3)}]\ 
For every $R$-module $M$, there is a unique exact sequence 
$0 \to T \to M \to F \to 0$ 
with $T \in \T$ and $F \in \F$. 
\end{itemize}
\end{rem}

It is well known that there is a one-to-one correspondence between the set of left exact radical functors and the set of hereditary torsion theories. 
In fact, if $\gamma$ is a left exact radical functor, then one obtains a hereditary torsion theory $(\T_{\gamma}, \F_{\gamma})$  by setting  
\begin{equation*}\label{torsion theory}
(*) 
\begin{cases}
&\T_{\gamma} = \{ T \in \RMod \mid \gamma (T) = T \}, \vspace{4pt} \\
&\F_{\gamma}= \{ F \in \RMod \mid \gamma (F) = 0 \}.
\end{cases}
\end{equation*} 
Conversely, given a hereditary torsion theory $(\T,\F)$ for $\RMod$,  
one can define a left exact radical functor $\gamma$ in such a way that the submodule $\gamma(M)$ of an $R$-module $M$ is the sum of all submodules of $M$ which belong to the class $\T$.

\vspace{12pt}

We denote by $\D^{+}(\RMod)$  the derived category of  $\RMod$  consisting of all left-bounded complexes of $R$-modules.
It is known that  $\D^+(\RMod)$ has structure of triangulated category. 
We always regard an $R$-module  $M$  as a complex $\cdots \to 0 \to M \to 0 \to \cdots$  in  $\D^+ (\RMod)$ concentrated in degree zero. 
In this way,  $\RMod$  is a full subcategory of  $\D^+(\RMod)$.

We recall some definitions and notation from the theory of triangulated categories. 
Let  $\T$ and $\T '$  be general triangulated categories. 
An additive functor $\delta : \T \to \T '$ is called a triangle functor provided that  $\delta(X[1]) \cong \delta(X)[1]$ for any $X \in \T$, and 
the diagram $\delta(X) \to \delta(Y) \to \delta(Z) \to \delta(X)[1]$ is a triangle in $\T '$  whenever $X \to Y \to Z \to X[1]$ is a triangle in $\T$. 
For any functor  $\delta : \T \to \T'$, we denote  
$$
\begin{array}{lll} 
&\Im (\delta ) &= \{ X' \in \T' \mid X' \cong \delta (X) \ \text{ for some} \ \ X \in \T\}, \vspace{2pt} \\
&\Ker (\delta ) &= \{ X \in \T \mid \delta (X) \cong 0\}, \hphantom{HHHHHH} \\
\end{array} 
$$
which we regard as full subcategories of  $\T$  and  $\T'$ respectively. For a full subcategory  $\U \subseteq  \T$, the perpendicular full subcategories are defined as 
$$
\begin{array}{cll} 
&\hphantom{\perp}\U ^{\perp}  &= \{ X \in \T \mid  \Hom _{\T} (U, X) =0 \ \text{ for all } \ U \in \U \}, \vspace{2pt} \\
&{}^{\perp}\U \hphantom{\perp} &= \{ X \in \T \mid   \Hom _{\T} (X, U) =0 \ \text{ for all } \ U \in \U \}. 
\end{array} 
$$

The notion of stable t-structure is introduced by Miyachi \cite{Mi-1991}. 
Recall that a full subcategory of a triangulated category is called a triangulated subcategory if it is closed under the shift functor $[1]$  and  making triangles.

\begin{df}
A pair $(\U, \V)$  of full triangulated subcategories of a triangulated category $\T$ is called a stable t-structure on $\T$ if it satisfies the following conditions:
\begin{itemize}
\item[{(\rm i)}]
$\Hom_{\T} (\U,\V)=0$. \vspace{2pt}
\item[{(\rm ii)}]
For any $X \in \T$, there is a triangle $U \to X \to V \to U[1]$ with $U \in \U$ and $V \in \V$. \vspace{2pt}
\end{itemize}
\end{df}

The following theorem proved by Miyachi is a key to our argument. 
We shall refer to this theorem as Miyachi's Theorem.

\begin{thm}\cite[Proposition 2.6]{Mi-1991}\label{Miyachi}
Let $\T$ be a triangulated category and  $\U$  a full triangulated subcategory of $\T$. 
Then the following conditions are equivalent for  $\U$.

\begin{itemize}
\item[\rm{(1)}] 
There is a full subcategory  $\V$  of $\T$  such that  $(\U, \V)$  is a stable t-structure on  $\T$. 
\vskip2pt
\item[\rm{(2)}] 
The natural embedding functor  $i : \U \to \T$  has a right adjoint $\rho : \T \to \U$. 
\end{itemize}
If it is the case, setting $\delta = i \circ \rho : \T \to \T$, we have the equalities 
$$
\U = \Im (\delta ) \ \ \text{and} \ \ 
\V = \U^{\perp} = \Ker (\delta ). 
$$
\end{thm}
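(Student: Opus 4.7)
The plan is to prove each direction by an explicit construction, and then read off the identifications of $\U$ and $\V$ in terms of $\delta = i\circ \rho$.

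For $(1) \Rightarrow (2)$, given a stable t-structure $(\U, \V)$ I would, for each $X \in \T$, fix a triangle $U_X \to X \to V_X \to U_X[1]$ with $U_X \in \U$ and $V_X \in \V$ provided by condition (ii). The first key step is to show that this triangle is unique up to a unique isomorphism: for any morphism $f : X \to Y$, the composite $U_X \to X \to Y$ lifts uniquely along $U_Y \to Y$ to some $U_X \to U_Y$, because applying $\Hom_\T(U_X, -)$ to the triangle $U_Y \to Y \to V_Y \to U_Y[1]$ and using $\Hom_\T(U_X, V_Y) = \Hom_\T(U_X, V_Y[-1]) = 0$ (which holds since $\V$ is triangulated and hence closed under shifts) gives $\Hom_\T(U_X, U_Y) \cong \Hom_\T(U_X, Y)$. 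Setting $\rho(X) = U_X$ then defines a functor, and the same vanishing computation with $U \in \U$ arbitrary in place of $U_X$ yields the adjunction isomorphism $\Hom_\T(U, \rho(X)) \cong \Hom_\T(U, X)$.

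For $(2) \Rightarrow (1)$, define $\V = \Ker(\delta)$. Since $i$ is fully faithful, the unit $\eta : \id_\U \to \rho \circ i$ is an isomorphism, and the triangle identities force the counit $\epsilon : \delta \to \id_\T$ to satisfy that $\delta\epsilon$ is invertible; in other words, $\delta$ is idempotent. For any $X \in \T$, complete $\epsilon_X : \delta(X) \to X$ to a triangle $\delta(X) \to X \to V \to \delta(X)[1]$ and apply the triangle functor $\delta$: the first arrow becomes an isomorphism, so $\delta(V) = 0$, i.e., $V \in \V$. The orthogonality $\Hom_\T(\U, \V) = 0$ follows from the adjunction, since for $U \in \U$ and $V \in \V$ one has $\Hom_\T(iU, V) \cong \Hom_\U(U, \rho V) = 0$ (because $i\rho V = \delta V = 0$ and $i$ is faithful). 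Hence $(\U, \V)$ is a stable t-structure.

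The identifications are now easy. From $\rho \circ i \cong \id_\U$ any $U \in \U$ equals $i\rho(iU) = \delta(iU)$, giving $\U = \Im(\delta)$, while the computation just made is exactly $\V = \Ker(\delta) \subseteq \U^\perp$. For the reverse inclusion, if $X \in \U^\perp$ then $\epsilon_X \in \Hom_\T(\delta(X), X) = 0$, and invertibility of $\delta\epsilon$ forces $\delta(X) = 0$, i.e., $X \in \V$. The main technical point I expect to require care is direction $(1) \Rightarrow (2)$, namely turning the existence of the triangles in (ii) into a bona fide functor $\rho$: this is where the orthogonality hypothesis really does the work, producing canonical lifts of morphisms and, by the same mechanism, the triangle-functor structure on $\rho$. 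Everything else reduces to standard manipulations with adjoint pairs in which one member is fully faithful.
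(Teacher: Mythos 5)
Your argument is essentially correct and follows the same overall architecture as the paper's proof: in $(1)\Rightarrow(2)$ you build $\rho$ from the decomposition triangles exactly as the paper does (your justification of the unique lift via the vanishing of $\Hom_{\T}(U_X,V_Y)$ and $\Hom_{\T}(U_X,V_Y[-1])$ is precisely the detail the paper leaves to the reader), and the adjunction isomorphism is obtained identically. Where you genuinely diverge is in $(2)\Rightarrow(1)$ and in the identification $\U^{\perp}=\Ker(\delta)$. The paper takes $\V=\U^{\perp}$ and shows $V_X\in\U^{\perp}$ by applying $\Hom_{\T}(i(U),-)$ to the counit triangle and using that $\Hom_{\T}(i(U),\phi(X))$ is an isomorphism; you instead take $\V=\Ker(\delta)$, derive idempotence of $\delta$ (invertibility of $\delta\epsilon$) from full faithfulness of $i$, and apply $\delta$ to the counit triangle. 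Your route is arguably cleaner -- in particular your proof that $X\in\U^{\perp}$ forces $\delta(X)=0$ (namely $\epsilon_X=0$ plus invertibility of $\delta\epsilon_X$) is slicker than the paper's splitting argument -- but it leans on the fact that a right adjoint of a triangle functor is again a triangle functor, both to apply $\delta$ to a triangle and to know that $\Ker(\delta)$ is a full triangulated subcategory as the definition of stable t-structure requires. The paper deliberately avoids this dependence in the proof itself, relegating it to Remark \ref{MiyachiRemark}(1); you should at least flag it as the standard input it is.

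One genuine omission: the final statement asserts $\V=\U^{\perp}=\Ker(\delta)$ for the subcategory $\V$ \emph{given} in condition (1), whereas you only establish $\U^{\perp}=\Ker(\delta)$ for the $\V$ you construct in $(2)\Rightarrow(1)$. You still need the two easy inclusions handling the original $\V$: the inclusion $\V\subseteq\U^{\perp}$ is immediate from condition (i), and $\Ker(\delta)\subseteq\V$ follows because $X\in\Ker(\delta)$ makes $U_X\cong 0$ in its decomposition triangle, whence $X\cong V_X\in\V$. This is a two-line fix, but as written your proof does not pin down the given $\V$.
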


\begin{proof}
Although a proof of the theorem is given in \cite[Proposition 2.6]{Mi-1991}, 
we need in the later part of the present paper how the adjoint functor corresponds to the subcategory. 
For this reason we briefly recall the proof of the theorem. 

Assume that  $(\U, \V)$  is a stable t-structure on $\T$.
Then, for any  $X \in \T$, there is a triangle 
$U \to X \to V \to U[1]$  with  $U \in \U$ and  $V \in \V$.
We first note that  $U$  is uniquely determined by  $X$   up to isomorphisms. 
In fact, this can be easily proved only by using the conditions (i) and (ii)  in the definition of a stable t-structure. 
Similarly, given a morphism  $f : X_1 \to X_2$  in  $\T$,  we can easily see that it induces a morphism of triangles 
$$
\begin{CD}
U_1   @>>> X_1 @>>> V_1 @>>> U_1[1]   \\
@V{g}VV  @V{f}VV     @VVV    @V{g[1]}VV  \\
U_2   @>>> X_2 @>>> V_2 @>>> U_2[1],    \\
\end{CD}
$$
where  $U_1, U_2 \in \U$  and  $V_1, V_2 \in \V$,  
and the morphism $g$  is uniquely determined, so that it depends only on $f$. 
In such a way we can define a functor $\rho : \T \to \U$  by setting  $\rho (X)  = U$  and  $\rho (f) = g$ under the notation above.  
By this construction, every $X \in \T$  is embedded in a triangle of the form 
$
i\circ \rho (X)  \to X \to V \to i \circ \rho (X)[1], 
$
where  $V \in \V$.
Then, for any $U \in \U$, since  $\Hom _{\T} (i(U), V) = 0$,  we have 
$$
\Hom _{\U}(U, \rho (X)) = \Hom _{\T}(i(U), i\circ \rho (X)) \cong \Hom _{\T} (i(U), X). 
$$ 
Therefore  $\rho$  is a right adjoint of $i$. 
In this case, if  $X \in \Ker (\delta)$  where  $\delta = i \circ \rho$, then 
the above triangle shows that  $X \cong V \in \V$. 
Hence we have   $\Ker (\delta ) \subseteq \V$.

Conversely assume that $i$ has a right adjoint $\rho : \T \to \U$. 
Then there is an adjunction morphism  $\phi : i \circ \rho \to \id$,  where $\id$  is the identity functor on  $\T$. 
Therefore every  $X \in \T$  can be embedded in a triangle of the form 
\begin{equation}\label{ttt}
\begin{CD}
i\circ \rho (X)  @>{\phi (X)}>> X @>>> V_X @>>> i\circ \rho (X) [1].   \\
\end{CD}
\end{equation}
It follows from the property of adjunction morphisms that for any object $U \in \U$, 
$\Hom _{\T} (i(U), \phi (X))$ is an isomorphism, and hence $\Hom _{\T} (i(U), V_X) =0$. 
This implies that  $V_X \in \U ^{\perp}$.     
Thus one can see that  $(\U, \U^{\perp})$  is a stable t-structure on $\T$. 

Let  $(\U, \V)$  be a stable t-structure on  $\T$,  and let  $\rho$  be a right adjoint of  $i : \U \to \T$. 
Set $\delta = i \circ \rho$ as above. 
Then  we have shown that  $\Ker (\delta)\subseteq  \V$,  and the inclusion $\V \subseteq \U^{\perp}$  holds obviously from the definition. 
Now assume  $X \in \U^{\perp}$. 
Then  $\phi (X) =0$  in the triangle (\ref{ttt}), as it is an element of  $\Hom _{\T}(i\circ \rho (X), X)$  and $\rho (X) \in \U$. 
Therefore the triangle splits off and we have $V_X \cong X \oplus i\circ \rho (X) [1]$. 
However $\Hom _{\T}(i\circ \rho (X)[1], V_X) = 0$, since  $i\circ \rho (X)[1] \in \U$ and  $V_X \in \U^{\perp}$. 
This implies that $i \circ \rho (X)=0$,  hence  $X\in \Ker (\delta)$. 
Thus it follows that  $\U ^{\perp} \subseteq  \Ker (\delta)$. 
Hence we have shown  $\Ker (\delta ) = \V = \U ^{\perp}$. 

To prove that  $\U \subseteq \Im (\delta)$, let  $X \in \U$. 
Then the morphism $X \to V_X$ in the triangle (\ref{ttt}) is zero, since  $V_X \in \U^{\perp}$. 
Therefore  we have $i\circ\rho (X) \cong X \oplus V_X[-1]$. 
Since  $\Hom _{\T}(i\circ \rho (X), V_X[-1]) = 0$, this implies  $V_X=0$ and 
$\phi (X)$  is an isomorphism. 
Thus  $X \in \Im (\delta)$.
\end{proof}

\begin{rem}\label{MiyachiRemark}
Let  $(\U, \V)$  be a stable t-structure on  $\T$,  and let  $\rho$  be a right adjoint functor of  $i : \U \to \T$. 
Set $\delta = i \circ \rho$ as in the theorem. 
\begin{itemize}
\item[$(1)$] 
It is known and is easy to see that the functors  $\rho$  and  $\delta$  are triangle functors. 

\item[$(2)$]
The functor $\rho$, hence $\delta$ as well, is unique up to isomorphisms, 
by the uniqueness of right adjoint functors. 

\item[$(3)$]
As we have shown in the last paragraph of the proof, an object  $X \in \T$  belongs to  $\U = \Im (\delta)$  if and only if  the morphism  $\phi (X) : \delta (X) \to X$  is an isomorphism. 
\end{itemize}
\end{rem}


Now we can define an abstract local cohomology functor which is a main theme of this paper.

\begin{df}\label{df-ALC}
We denote $\T = \D^{+}(\RMod)$ in this definition. 
Let $\delta : \T \to \T$ be a triangle functor. 
We call that $\delta$ is an abstract local cohomology functor if the following conditions are satisfied: 
\begin{itemize}
\item[({\rm 1})]
The natural embedding functor  $i : \Im (\delta) \to \T$  has a right adjoint   $\rho : \T \to \Im (\delta)$  and  $\delta \cong i \circ \rho$. 
(Hence, by Miyachi's Theorem, $(\Im (\delta), \Ker (\delta))$  is a stable t-structure on $\T$.) 

\item[({\rm 2})]
The t-structure  $(\Im (\delta), \Ker(\delta))$  divides indecomposable injective $R$-modules, by which we mean that each indecomposable injective $R$-module belongs to either $\Im(\delta)$ or $\Ker (\delta)$.
\end{itemize} 
\end{df}

\begin{exm}\label{exm-alc}
We denote by  $E_{R}(R/\p)$  the injective hull of an  $R$-module $R/\p $ for a prime ideal  $\p \in \Spec (R)$. 
Note that any indecomposable injective  $R$-module is isomorphic to $E_R(R/\p)$ for some $\p \in \Spec (R)$, since  $R$  is assumed to be noetherian. 

Let  $W$  be a specialization-closed subset of $\Spec (R)$. 
As we have explained in Example \ref{exm-LC}, the section functor $\G _W : \RMod \to \RMod$  is a left exact radical functor.
Hence we can define the right derived functor  $\R\G _W : \D^{+}(\RMod) \to \D^{+}(\RMod)$. 
We claim that  $\R\G _W$  is an abstract local cohomology functor. 

In fact, it is known that $\D^{+}(\RMod)$ is triangle-equivalent to the triangulated category  $\K^{+}(\Inj(R))$, which is the homotopy category consisting of all left-bounded injective complexes over  $R$. 
 Through this equivalence, for any injective complex  $I \in \K^+(\Inj (R))$,  $\R\G_W (I) = \G_W (I)$  is the subcomplex of  $I$  consisting of injective modules supported in $W$. 
Hence every object of $\Im (\R\G_W)$ (resp. $\Ker (\R\G_W)$)  is an injective complex  whose components are direct sums of  $E_R(R/\p)$  with  $\p \in W$  (resp. $\p \in \Spec (R) \backslash W$). 
In particular, if  $\p \in W$  (resp. $\p \in \Spec (R) \backslash W$), then $E_R (R/\p) \in \Im (\R\G_W)$ (resp. $E_R(R/\p) \in \Ker (\R\G_W)$). 
Since  $\Hom _R (E_R(R/\p), E_R(R/\q)) =0$  for  $\p \in W$  and  $\q \in \Spec (R) \backslash W$, we can see that 
$\Hom _{ \K ^{+}(\Inj (R))}(I, J) = \Hom _{ \K ^{+}(\Inj (R))}(I, \G_W(J))$  for any   $I \in \Im (\R\G_W)$  and  $J \in  \K ^{+}(\Inj (R))$. 
Hence it follows from the above equivalence that  $\R\G_W$  is a right adjoint of the natural embedding  $i : \Im (\R\G_W) \to  \D ^{+}(\RMod)$. 
\end{exm}

\begin{rem}
Even if  $R$  is a non-commutative ring, Definition \ref{df-ALC} is valid for defining  an abstract local cohomology functor over  $R$. 
We can give such an example over non-commutative rings in a similar way to Example \ref{exm-alc}. 
 
For this, let  $R$  be a non-commutative ring. 
We define  $\SSpec (R)$ to be  the set of isomorphism classes of all indecomposable injective left $R$-modules. 
Assume that a subset  $\WW$  of $\SSpec (R)$ satisfies that $\Hom _R (E, E') =0$  for all  $E \in \WW$  and  $E' \in \SSpec (R)\backslash \WW$. 
For an injective left $R$-module  $I$, if $I$ decomposes as  $I = \bigoplus _i E_i$  with every  $E_i$  being indecomposable, then we define  $\G _{\WW}(I)$  to be the submodule  $\bigoplus _{E_i \in \WW} E_i$. 
For an injective complex  $I\in \K^+(\Inj (R))$  we also define  $\G _{\WW} (I)$ just by applying the functor  $\G_{\WW}$  on each component of  $I$. 
Then, through the equivalence  $\D^{+}(\RMod) \cong \K ^{+}(\Inj (R))$, it defines the triangle functor  $\R\G_{\WW}$  on  $\D^{+}(\RMod)$. 
Then it is quite similarly proved that  $\R\G_{\WW}$  satisfies the conditions in Definition \ref{df-ALC}. 
\end{rem}

\section{Characterization of abstract local cohomology functors}

Let  $W$  be a specialization-closed subset of $\Spec (R)$ and $\G _W$  be a section functor with support in $W$. 
We have pointed out in Example \ref{exm-alc} that the right derived functor  $\R\G_W$  is an abstract local cohomology functor. 
In this section we shall prove that every abstract local cohomology functor is of this form. 
We will do this after a sequence of lemmas and propositions.

\vspace{6pt}

First of all we shall show every left exact radical functor preserves injectivity.

\begin{lem}\label{ler}
Let $\gamma$ be a left exact radical functor and  $\p \in \Spec (R)$. 
Then $\gamma (E_{R}(R/\p))$ is identical to either $E_{R}(R/\p)$ or $0$.
\end{lem}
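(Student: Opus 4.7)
The plan is to argue according to whether $\gamma(E_R(R/\p))$ vanishes: the zero case is immediate, so the core of the proof is to show that if $\gamma(E_R(R/\p)) \ne 0$, then $\gamma(E_R(R/\p)) = E_R(R/\p)$, equivalently $E_R(R/\p) \in \T_\gamma$, where $(\T_\gamma, \F_\gamma)$ is the hereditary torsion theory attached to $\gamma$ by (*).

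The first step would exploit the essentialness of $R/\p$ in $E_R(R/\p)$. Applying Lemma \ref{subfunctor}(1) to the inclusion $R/\p \hookrightarrow E_R(R/\p)$ gives
\[
\gamma(R/\p) \;=\; R/\p \,\cap\, \gamma(E_R(R/\p)),
\]
and this intersection is nonzero by essentialness. Since $R/\p$ is an integral domain, any nonzero element $\bar r \in R/\p$ satisfies $\Ann_R(\bar r) = \p$, so $R\bar r \cong R/\p$. Thus any nonzero element of $\gamma(R/\p)$ generates a submodule isomorphic to $R/\p$, and since $\T_\gamma$ is closed under submodules (hereditariness), this forces $R/\p \in \T_\gamma$.

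The second step would bootstrap $R/\p \in \T_\gamma$ up to $E_R(R/\p) \in \T_\gamma$. For each $n\ge 1$, the filtration
\[
0 \;\subset\; \p^{n-1}/\p^n \;\subset\; \cdots \;\subset\; \p/\p^n \;\subset\; R/\p^n
\]
has successive quotients $\p^k/\p^{k+1}$ which are $R/\p$-modules, hence quotients of direct sums of copies of $R/\p$; closure of $\T_\gamma$ under direct sums, quotients, and extensions then gives $R/\p^n \in \T_\gamma$. The Noetherian hypothesis on $R$ together with $\Ass_R(E_R(R/\p)) = \{\p\}$ implies that every element of $E_R(R/\p)$ is annihilated by some power of $\p$, whence
\[
E_R(R/\p) \;=\; \bigcup_{n \ge 1} (0 :_{E_R(R/\p)} \p^n).
\]
Each $(0 :_{E_R(R/\p)} \p^n)$ is a sum of cyclic $R/\p^n$-modules (each of which is a quotient of $R/\p^n \in \T_\gamma$), hence lies in $\T_\gamma$, and the union (being a quotient of the direct sum of these pieces) also lies in $\T_\gamma$.

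The main obstacle is this last climb from $R/\p$ to its injective hull. It rests on the Noetherian structure of $E_R(R/\p)$, namely that every element is killed by some power of $\p$, rather than on any abstract closure property of $\T_\gamma$ under essential extensions; in particular the closure of $\T_\gamma$ under arbitrary direct sums is indispensable for handling the ascending union above.
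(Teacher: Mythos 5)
Your proof is correct and follows essentially the same route as the paper's: both pass to the hereditary torsion theory $(\T_\gamma,\F_\gamma)$, use essentiality of $R/\p$ in $E_R(R/\p)$ to get $R/\p\in\T_\gamma$ in the nonvanishing case, and then bootstrap up to $E_R(R/\p)\in\T_\gamma$ using that every element of the injective hull is annihilated by a power of $\p$. The only difference is the bookkeeping in that last step: the paper filters each cyclic submodule $Ry$ by prime quotients $R/\q_i$ with $\q_i\supseteq\p$ and uses only closure under quotients and extensions, whereas you go through $R/\p^n\in\T_\gamma$ and the ascending chain $(0:_{E_R(R/\p)}\p^n)$, which additionally invokes closure of $\T_\gamma$ under arbitrary direct sums.
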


\begin{proof}
Let $(\T_{\gamma},\F_{\gamma})$ be a hereditary torsion theory for $\RMod$  corresponding to  $\gamma$, which is defined in $(*)$ after Remark \ref{rem on torsion}. 
Then there is an exact sequence 
$0 \to T \to E_{R}(R/\p) \to F \to 0$ 
with $T \in \T_{\gamma}$ and $F \in \F_{\gamma}$. 
If $T=0$, then $E_{R}(R/\p) \cong F \in \F_{\gamma}$, therefore  $\gamma (E_R(R/\p)) =0$.
If $T \neq 0$, then there is an element $x \in T \cap R/\p$ such that $Rx \cong R/\p$. 
Since $\T_{\gamma}$ is closed under taking submodules, we have $R/\p \in \T_{\gamma}$.
Let  $y$  be an arbitrary element of $E_{R}(R/\p)$. 
Then there is a filtration of the $R$-module  $Ry$;  
$$
0 =M_n \subsetneq M_{n-1} \subsetneq \dots \subsetneq M_0=Ry, 
$$
such that $M_{i}/M_{i+1} \cong R/\q _i$  for some $\q_i \in \Spec (R)$ ($0 \leq i < n$). 
See \cite[Theorem 6.4]{M}. 
It is known that $\p ^n y = 0$  for $n \gg 1$, hence all the $\q_i \ (0 \leq i < n)$ contain $\p$. 
Since $\T_{\gamma}$ is closed under quotients and extensions, it results  that  $Ry \in \T_{\gamma}$. 
Therefore it follows that $y \in Ry = \gamma (Ry) \subseteq \gamma (E_R(R/\p)) = T$. 
Since this holds for every  $y \in E_R(R/\p)$, we have  $E_R(R/\p) = T \in \T_{\gamma}$, hence  $\gamma (E_R(R/\p)) = E_R(R/\p)$. 
\end{proof}

As a result of this lemma we have the following. 

\begin{prop}\label{1 impies 2}
If  $\gamma$ is a left exact radical functor on $\RMod$, 
then $\gamma$  preserves injectivity. 
\end{prop}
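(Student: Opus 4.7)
The plan is to reduce the statement to Lemma \ref{ler} via the Matlis decomposition of an arbitrary injective module, and then to use the torsion-theoretic closure properties together with Lemma \ref{subfunctor}\,(1) to compute $\gamma(I)$ as a direct summand.

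First, let $I$ be an injective $R$-module. Since $R$ is noetherian, Matlis's structure theorem gives a decomposition $I=\bigoplus_{\lambda\in\Lambda}E_{R}(R/\p_{\lambda})$ into indecomposable injective summands. Using Lemma \ref{ler}, I would partition the index set as $\Lambda=A\sqcup B$, with
$$
A=\{\lambda\mid \gamma(E_{R}(R/\p_{\lambda}))=E_{R}(R/\p_{\lambda})\},\qquad
B=\{\lambda\mid \gamma(E_{R}(R/\p_{\lambda}))=0\},
$$
and write $I_{A}=\bigoplus_{\lambda\in A}E_{R}(R/\p_{\lambda})$ and $I_{B}=\bigoplus_{\lambda\in B}E_{R}(R/\p_{\lambda})$, so that $I=I_{A}\oplus I_{B}$.

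Next, I would pass to the hereditary torsion theory $(\T_{\gamma},\F_{\gamma})$ attached to $\gamma$. Each $E_{R}(R/\p_{\lambda})$ with $\lambda\in A$ lies in $\T_{\gamma}$, and since $\T_{\gamma}$ is closed under direct sums (Remark on torsion theories), we get $I_{A}\in\T_{\gamma}$, i.e.\ $\gamma(I_{A})=I_{A}$. Similarly each $E_{R}(R/\p_{\lambda})$ with $\lambda\in B$ lies in $\F_{\gamma}$; since $\F_{\gamma}$ is closed under direct products and submodules, and $\bigoplus\hookrightarrow\prod$, it is closed under direct sums, so $I_{B}\in\F_{\gamma}$ and $\gamma(I_{B})=0$.

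It remains to identify $\gamma(I)$ with $I_{A}$. Applying Lemma \ref{subfunctor}\,(1) to the inclusion $I_{A}\subseteq I$ gives $I_{A}=\gamma(I_{A})=I_{A}\cap\gamma(I)$, so $I_{A}\subseteq\gamma(I)$. For the reverse inclusion, I would exploit the fact that $\gamma\subseteq\id$, which means that for the projection $\pi\colon I\to I_{B}$ the induced map $\gamma(\pi)\colon\gamma(I)\to\gamma(I_{B})=0$ is the restriction of $\pi$; hence $\gamma(I)\subseteq\Ker(\pi)=I_{A}$. Therefore $\gamma(I)=I_{A}$, and since $R$ is noetherian, a direct sum of indecomposable injectives is again injective (Matlis/Bass--Papp), so $\gamma(I)$ is injective.

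I do not expect a serious obstacle: the only subtle point is justifying that $\gamma$ behaves well with respect to the Matlis decomposition, and this is handled by combining the torsion closure axioms (for $\T_{\gamma}$ and $\F_{\gamma}$) with the naturality of the subfunctor inclusion $\gamma\subseteq\id$. The noetherian hypothesis is used crucially both in the existence of the Matlis decomposition and in ensuring that the resulting direct sum $I_{A}$ is injective.
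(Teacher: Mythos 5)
Your proof is correct and follows essentially the same route as the paper: decompose the injective module via Matlis into indecomposable summands, split the index set using Lemma \ref{ler}, place the two pieces in $\T_{\gamma}$ and $\F_{\gamma}$ via the closure properties of the hereditary torsion theory, and conclude that $\gamma(I)$ is the $\T_{\gamma}$-part. Your explicit verification that $\gamma(I)=I_A$ (via Lemma \ref{subfunctor}(1) for one inclusion and the projection onto $I_B$ for the other) just spells out the step the paper states as $\gamma(E)=\gamma(E_1)\oplus\gamma(E_2)=E_1$.
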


\begin{proof}
As in the proof of the lemma, let $(\T_{\gamma},\F_{\gamma})$ be a hereditary torsion theory for $\RMod$  corresponding to  $\gamma$. 
For an injective $R$-module $E$, it is known that it has a decomposition into indecomposable injective $R$-modules, say  $E = \bigoplus_{\lambda\in \Lambda}E_{R}(R/\p_{\lambda})$.  
We set $E_1= \bigoplus_{\lambda\in \Lambda_{1}}E_{R}(R/\p_{\lambda})$ and $E_2=\bigoplus_{\lambda\in \Lambda _2}E_{R}(R/\p_{\lambda})$,  where 
$$
\Lambda _1=\{ \lambda \in \Lambda \mid \gamma (E_{R}(R/\p_{\lambda})) = E_{R}(R/\p_{\lambda}) \} , \quad
\Lambda _2=\{ \lambda \in \Lambda \mid  \gamma( E_{R}(R/\p_{\lambda}))=0\}.
$$
It follows from Lemma \ref{ler} that $E =  E_1\oplus E_2$. 
Since $\T_{\gamma}$ is closed under taking direct sums and $\F_{\gamma}$ is closed under taking direct products and submodules, we have  $E_1 \in \T_{\gamma}$ and $E_2 \in \F_{\gamma}$. 
Therefore we have an equality $\gamma(E) = \gamma(E_1) \oplus \gamma (E_2)=E_1$, which is an injective $R$-module.
\end{proof}

\vspace{6pt}

Next we shall show that every left exact preradical functor which preserves injectivity is  of the form  $\G_W$  for a specialization-closed subset  $W$  of  $\Spec (R)$.
We begin with the following lemma.

\begin{lem}\label{basic}
Let $\gamma$ be a left exact preradical functor which preserves injectivity. 
Then the following hold for a prime ideal $\p$ of  $R$. 
\begin{itemize}
\item[{(\rm 1)}]
$\gamma (E_{R}(R/\p))$ is either $E_{R}(R/\p)$ or $0$. 
\vspace{2pt}
\item[{(\rm 2)}]
$\gamma (R/\p)$ is either $R/\p$ or $0$.
\end{itemize}
\end{lem}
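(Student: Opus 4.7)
The plan is to prove the two parts in the stated order, deducing (2) directly from (1) by exploiting the canonical embedding $R/\p \hookrightarrow E_R(R/\p)$.

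For part (1), I would argue as follows. Since $\gamma$ is a preradical functor, $\gamma(E_R(R/\p))$ is a submodule of $E_R(R/\p)$. By the hypothesis that $\gamma$ preserves injectivity, $\gamma(E_R(R/\p))$ is moreover an injective $R$-module. Any injective submodule of a module is a direct summand, so $\gamma(E_R(R/\p))$ is a direct summand of $E_R(R/\p)$. Because $E_R(R/\p)$ is an indecomposable injective (this is standard over a noetherian ring; it is used implicitly throughout the excerpt), its only direct summands are $0$ and $E_R(R/\p)$ itself. This forces $\gamma(E_R(R/\p))$ to be one of these two modules.

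For part (2), I would invoke Lemma \ref{subfunctor}(1), which applies because $\gamma$ is left exact and a subfunctor of $\id$. Taking the submodule $R/\p \subseteq E_R(R/\p)$, that lemma yields
\[
\gamma(R/\p) \;=\; R/\p \,\cap\, \gamma(E_R(R/\p)).
\]
By part (1), the right-hand intersection is either $R/\p \cap 0 = 0$ or $R/\p \cap E_R(R/\p) = R/\p$, finishing the proof.

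There is no real obstacle here; the argument is essentially a two-line observation once one has (a) the fact that injective submodules split off, (b) the indecomposability of $E_R(R/\p)$, and (c) the intersection formula $\gamma(N) = N \cap \gamma(M)$ from Lemma \ref{subfunctor}(1). The only thing worth noting is that, unlike Lemma \ref{ler}, we do not have the radical property, so we cannot appeal to the torsion-theory correspondence; the injectivity-preservation hypothesis is precisely what replaces that tool.
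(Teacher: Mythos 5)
Your proof is correct and is essentially identical to the paper's: part (1) via the splitting of the injective submodule $\gamma(E_R(R/\p))$ together with the indecomposability of $E_R(R/\p)$, and part (2) via the intersection formula of Lemma \ref{subfunctor}(1). No further comments are needed.
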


\begin{proof}
$(\rm 1)$  Since $\gamma (E_{R}(R/\p))$ is an injective submodule of an indecomposable injective module $E_R (R/\p)$, it is a direct summand of  $E_{R}(R/\p)$.  Thus the indecomposability of  $E_R(R/\p)$ forces $\gamma (E_{R}(R/\p))$ is either $E_{R}(R/\p)$ or $0$.

\noindent
$(\rm 2)$ 
It follows from Lemma \ref{subfunctor}(1) that  $\gamma (R/\p) = R/\p \cap \gamma (E_{R}(R/\p))$. 
Therefore $\gamma (R/\p)$ is either $R/\p$ or $0$ by $(\rm 1)$.
\end{proof}

\begin{df}\label{W} 
For a left exact preradical functor  $\gamma$ which preserves injectivity, we define a subset  $W_{\gamma}$  of $\Spec (R)$  as follows: 
$$
W_{\gamma}=\{ \p \in \Spec (R) \mid \gamma (R/\p)=R/\p \}. 
$$
\end{df}  

Note from the proof of Lemma \ref{basic} that  $W_{\gamma}$  is the same as the set  $\{ \p \in \Spec (R) \mid \gamma (E_R(R/\p))=E_R(R/\p)  \}$.

\begin{lem}\label{support}
Let $\gamma$ be a left exact preradical functor which preserves injectivity. 
Then  $W_{\gamma}$ is closed under specialization.
\end{lem}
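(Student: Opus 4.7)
The plan is to show that for any $\p \in W_\gamma$ and any $\q \in \Spec(R)$ containing $\p$, one has $\q \in W_\gamma$. The key idea is to apply $\gamma$ to the short exact sequence
\[ 0 \to \q/\p \to R/\p \to R/\q \to 0 \]
and exploit the left exactness of $\gamma$ together with the subfunctor-of-$\id$ property.

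First I would compute $\gamma(\q/\p)$: since $\q/\p$ is an $R$-submodule of $R/\p$ and $\gamma$ is a left exact subfunctor of $\id$, Lemma \ref{subfunctor}(1) yields $\gamma(\q/\p) = (\q/\p) \cap \gamma(R/\p)$. The hypothesis $\p \in W_\gamma$ means $\gamma(R/\p) = R/\p$, so this simplifies to $\gamma(\q/\p) = \q/\p$.

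Next, applying $\gamma$ to the short exact sequence above and invoking left exactness produces the exact sequence
\[ 0 \to \gamma(\q/\p) \to \gamma(R/\p) \to \gamma(R/\q). \]
Substituting the two identifications $\gamma(\q/\p) = \q/\p$ and $\gamma(R/\p) = R/\p$, the induced injection $\gamma(R/\p)/\gamma(\q/\p) \hookrightarrow \gamma(R/\q)$ takes the form $R/\q \hookrightarrow \gamma(R/\q)$. Composing this with the subfunctor inclusion $\gamma(R/\q) \subseteq R/\q$ recovers the identity on $R/\q$ by a routine diagram chase, so $\gamma(R/\q) = R/\q$ and $\q \in W_\gamma$.

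There is essentially no obstacle in this argument — it is a direct diagram chase using only the left exactness of $\gamma$ and its being a subfunctor of $\id$. Neither the radical property nor the injectivity-preserving hypothesis of $\gamma$ is actually invoked at this step; the injectivity-preserving hypothesis plays its role earlier, in Lemma \ref{basic}, where it ensures that $\gamma(R/\p)$ is always either $R/\p$ or $0$ so that the definition of $W_\gamma$ is a genuine dichotomy.
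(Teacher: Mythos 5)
Your argument is correct and is essentially the paper's proof: both rest on applying $\gamma$ to the natural surjection $R/\p \twoheadrightarrow R/\q$ and using that $\gamma(f)$ is the restriction of $f$, so that $\gamma(R/\p)=R/\p$ forces its image $R/\q$ to lie inside $\gamma(R/\q)$. The detour through the short exact sequence and left exactness (computing $\gamma(\q/\p)$) is harmless but unnecessary — the paper gets the same conclusion from the single commutative square given by functoriality — and your closing remark that injectivity-preservation is not used at this step is also accurate.
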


\begin{proof}
Let $\p \in W_{\gamma}$. 
For any prime ideal $\q \supseteq \p$, there is a commutative diagram
\[ \begin{CD}
R/\p @>{p}>>  R/\q  \\
@|      @AA{\bigcup |}A  \\ 
\gamma (R/\p) @>>> \gamma (R/\q),  \\
\end{CD}\]
where $p$ is a natural projection map. 
Thus it follows from this diagram that $R/ \q = \gamma (R/\q)$, hence  $\q \in W_{\gamma}$. 
\end{proof}

Now we are able to prove the following proposition. 

\begin{prop}\label{ALC}
Let $\gamma$ be a left exact preradical functor which preserves injectivity.
Then the equality  $\gamma = \G_{W_{\gamma}}$  holds as subfunctors of $\id$, where  $W_{\gamma}$  is a specialization-closed subset of $\Spec (R)$  defined in  Definition \ref{W}. 
\end{prop}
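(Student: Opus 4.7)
The plan is to show that $\gamma$ and $\G_{W_\gamma}$ agree first on injective $R$-modules and then on every $R$-module, using Lemma \ref{subfunctor}(1) to pass from injective hulls to arbitrary modules.

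For the reduction step, given any $M \in \RMod$, take an injective embedding $M \hookrightarrow E(M)$. Since both $\gamma$ and $\G_{W_\gamma}$ are left exact subfunctors of $\id$, Lemma \ref{subfunctor}(1) applied to the inclusion $M \subseteq E(M)$ yields
\[
\gamma(M) = M \cap \gamma(E(M)),
\qquad
\G_{W_\gamma}(M) = M \cap \G_{W_\gamma}(E(M)).
\]
Hence it suffices to establish that $\gamma(E) = \G_{W_\gamma}(E)$ for every injective $R$-module $E$.

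To verify this, I write $E = \bigoplus_{\lambda \in \Lambda} E_R(R/\p_\lambda)$ via Matlis' structure theorem and split the index set as $\Lambda = \Lambda_1 \sqcup \Lambda_2$ according to whether $\p_\lambda \in W_\gamma$ or not. On the $\G_{W_\gamma}$ side the identification
\[
\G_{W_\gamma}(E) = \bigoplus_{\lambda \in \Lambda_1} E_R(R/\p_\lambda)
\]
follows from support considerations: each element of $E$ has only finitely many nonzero components, $\Supp(E_R(R/\p_\lambda)) = V(\p_\lambda)$, and $W_\gamma$ is specialization-closed by Lemma \ref{support}; conversely, projecting $x = \sum x_\lambda$ onto a component $E_R(R/\p_\lambda)$ with $\p_\lambda \notin W_\gamma$ and $x_\lambda \neq 0$ exhibits $\p_\lambda$ as an associated (hence supporting) prime of $Rx$. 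For $\gamma$ itself, the inclusion $\gamma(E) \subseteq \bigoplus_{\lambda \in \Lambda_1} E_R(R/\p_\lambda)$ is obtained by applying $\gamma$ to each projection $\pi_\mu : E \to E_R(R/\p_\mu)$ with $\mu \in \Lambda_2$; by Lemma \ref{basic}(1) the target vanishes under $\gamma$, so the restriction of $\pi_\mu$ to $\gamma(E)$ is zero for every such $\mu$. The reverse inclusion is handled element by element: any $x \in \bigoplus_{\lambda \in \Lambda_1} E_R(R/\p_\lambda)$ sits in a finite subsum $E' = \bigoplus_{i=1}^n E_R(R/\p_{\lambda_i})$ with each $\p_{\lambda_i} \in W_\gamma$, and additivity of $\gamma$ on finite biproducts together with Lemma \ref{basic}(1) gives $\gamma(E') = E'$; functoriality of the inclusion $E' \hookrightarrow E$ then yields $x \in \gamma(E)$.

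The main subtlety I anticipate is the handling of the infinite direct sum: a priori $\gamma$ commutes only with finite biproducts, so the equality $\gamma(E) = \bigoplus_{\lambda \in \Lambda_1} E_R(R/\p_\lambda)$ cannot be read off directly from Lemma \ref{basic}(1) applied componentwise. The argument above circumvents this by combining a one-sided containment obtained from the projections with a pointwise containment obtained from finite subsums, which is precisely where Lemma \ref{subfunctor}(1), the preservation of injectivity, and Matlis' decomposition all interact.
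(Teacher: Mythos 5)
Your proof is correct, and it reorganizes the reduction in a way that genuinely differs from the paper's. The paper never touches infinite direct sums of indecomposable injectives: it proves the equality first for \emph{finite} direct sums $\bigoplus_{i=1}^n E_R(R/\p_i)$ using additivity and Lemma \ref{basic}, then for finitely generated $M$ via the finite Matlis decomposition of $E_R(M)$ and Lemma \ref{subfunctor}(1), and finally for arbitrary $M$ by the pointwise criterion $x\in\gamma(M)\Leftrightarrow\gamma(Rx)=Rx$, which reduces everything to the cyclic (hence finitely generated) module $Rx$. You instead establish $\gamma(E)=\G_{W_\gamma}(E)$ for \emph{every} injective $E$, which forces you to handle an infinite Matlis decomposition; your two-sided argument there --- the projections $\pi_\mu$ onto components indexed by $\Lambda_2$ killing $\gamma(E)$ because $\gamma(E_R(R/\p_\mu))=0$, combined with finite subsums giving the reverse containment --- is exactly the right way to do this and correctly avoids assuming $\gamma$ commutes with infinite direct sums. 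With that in hand a single application of Lemma \ref{subfunctor}(1) to $M\subseteq E(M)$ finishes the proof. What your route buys is a cleaner final reduction (one step instead of two) and an explicit description of $\gamma$ on all injectives, at the cost of the infinite-direct-sum bookkeeping; the paper's route buys the converse trade. One small imprecision: the identification $\G_{W_\gamma}(E)=\bigoplus_{\lambda\in\Lambda_1}E_R(R/\p_\lambda)$ really rests on the elementwise fact that $\Supp(Rx_\lambda)\subseteq V(\p_\lambda)$ for $x_\lambda\in E_R(R/\p_\lambda)$ (every element is killed by a power of $\p_\lambda$), rather than on a statement about $\Supp(E_R(R/\p_\lambda))$ as a whole; alternatively you could just run your projection-plus-finite-subsum argument verbatim for $\G_{W_\gamma}$ itself, since it is also a left exact preradical functor preserving injectivity with the same behavior on indecomposables.
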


\begin{proof}
We prove the equality $\gamma (M) = \G_{W_{\gamma}} (M)$ for any $R$-module $M$, which is enough for the proof, since the both functors are subfunctors of $\id$. 

First of all, we consider the case that $M$ is a finite direct sum of indecomposable injective $R$-modules $\bigoplus _{i=1}^{n} E_{R}(R/\p_{i})$. 
Then $\gamma (M) = \bigoplus _{\p_i \in W_{\gamma}} E_{R}(R/\p_{i})=\G_{W_{\gamma}}(M)$ from Lemma  \ref{basic} and Remark \ref{additive}(1).
(Note that, since  $\gamma$  is an additive functor, $\gamma$  commutes with finite direct sums.
This is used in the first equality above. )    

Next, we consider the case that $M$ is a finitely generated $R$-module. 
Since the injective hull $E_R(M)$ of $M$  is a finite direct sum of indecomposable injective modules, we have already shown that $\gamma (E_{R}(M)) = \G_{W_{\gamma}}(E_{R}(M))$. 
Thus, using Lemma \ref{subfunctor}(1), we have  $\gamma (M) = M \cap \gamma (E_{R}(M)) = M \cap \G_{W_{\gamma}} (E_{R}(M)) = \G_{W_{\gamma}}(M)$. 

Finally, we show the claimed equality  for an $R$-module $M$ without any assumption. 
We should notice that an element  $x\in M$ belongs to $\gamma (M)$ if and only if  the equality $\gamma (Rx)=Rx$ holds. 
In fact, this equivalence is easily observed from the equality  $\gamma (Rx) = Rx \cap \gamma (M)$ that we showed in Lemma \ref{subfunctor}(1). 
This equivalence is true for the section functor  $\G_{W_{\gamma}}$  as well. 
So $x \in M$  belongs to $\G_{W_{\gamma}} (M)$ if and only if $\G_{W_{\gamma}}(Rx) = Rx$. 
Since the claim is true for finitely generated $R$-module $Rx$, 
we have $\gamma (Rx)= \G_{W_{\gamma}} (Rx)$. 
Therefore,  we see that $x \in \gamma (M)$ if and only if $x \in \G_{W_{\gamma}}(M)$, 
and the proof is completed.
\end{proof}

Recall that, for a left exact functor $\gamma : \RMod \to \RMod$, 
we can define the right derived functor  $\R\gamma : \D^{+}(\RMod) \to \D^{+}(\RMod)$  which is of course a triangle functor.

\begin{thm}\label{eq-thm}
The following conditions are equivalent for a left exact preradical functor $\gamma$ on $\RMod$. 

\begin{itemize}  
\item[{(\rm 1)}] $\gamma$ is a radical functor.
\item[{(\rm 2)}] $\gamma$ preserves injectivity.
\item[{(\rm 3)}] $\gamma$ is a section functor with support in a specialization-closed subset of  $\Spec (R)$.
\item[{(\rm 4)}] $\R\gamma$  is an abstract local cohomology functor. 
\end{itemize}
\end{thm}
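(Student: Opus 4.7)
The plan is to cycle through the four conditions, relying on the already-established results for three sides of the square and supplying the new implication separately. Specifically, (1)$\Rightarrow$(2) is Proposition \ref{1 impies 2}, (2)$\Rightarrow$(3) is Proposition \ref{ALC}, and (3)$\Rightarrow$(1) follows from Example \ref{exm-LC} (since $\G_W$ is a left exact radical functor). The implication (3)$\Rightarrow$(4) is exactly the content of Example \ref{exm-alc}. Thus the only work left is to prove (4)$\Rightarrow$(2), which I would do as follows.

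Assume $\R\gamma$ is an abstract local cohomology functor. By Definition \ref{df-ALC}(2), every indecomposable injective $E_R(R/\p)$ lies in either $\Im(\R\gamma)$ or $\Ker(\R\gamma)$. Since $E_R(R/\p)$ is injective and $\gamma$ is left exact, $E_R(R/\p)$ is $\gamma$-acyclic, so $\R\gamma(E_R(R/\p))=\gamma(E_R(R/\p))$ as a complex concentrated in degree zero. If $E_R(R/\p)\in\Ker(\R\gamma)$ we obtain $\gamma(E_R(R/\p))=0$ at once. If $E_R(R/\p)\in\Im(\R\gamma)$, then by Remark \ref{MiyachiRemark}(3) the adjunction morphism $\phi(E_R(R/\p)):\R\gamma(E_R(R/\p))\to E_R(R/\p)$ is an isomorphism, hence $\gamma(E_R(R/\p))\cong E_R(R/\p)$ as $R$-modules. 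Combined with $\gamma(E_R(R/\p))\subseteq E_R(R/\p)$, the indecomposability of $E_R(R/\p)$ (any injective submodule is a direct summand, hence $0$ or the whole module) forces $\gamma(E_R(R/\p))=E_R(R/\p)$. Thus $\gamma(E_R(R/\p))\in\{0,E_R(R/\p)\}$ for every $\p\in\Spec(R)$.

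Given this dichotomy, for an arbitrary injective module $E$ take a Matlis decomposition $E=\bigoplus_{\lambda\in\Lambda}E_R(R/\p_\lambda)$ and partition $\Lambda=\Lambda_1\cup\Lambda_2$ into indices where $\gamma$ equals the summand and where it is zero. Set $E_j=\bigoplus_{\lambda\in\Lambda_j}E_R(R/\p_\lambda)$. For each $\lambda\in\Lambda_1$, Lemma \ref{subfunctor}(1) gives $E_R(R/\p_\lambda)=\gamma(E_R(R/\p_\lambda))=E_R(R/\p_\lambda)\cap\gamma(E)\subseteq\gamma(E)$, so $E_1\subseteq\gamma(E)$. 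Conversely, for $x\in\gamma(E)$, the element $x$ lies in some finite subsum $E'\subseteq E$; by Lemma \ref{subfunctor}(1) we have $x\in E'\cap\gamma(E)=\gamma(E')$, and $\gamma$ commutes with finite direct sums by additivity (Remark \ref{additive}(1)), so $\gamma(E')=\bigoplus\gamma(E_R(R/\p_\mu))\subseteq E_1$. Therefore $\gamma(E)=E_1$, which is a direct sum of indecomposable injectives and hence injective. This verifies (2).

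The main obstacle is the bridge from the derived-category condition in (4) to the module-level dichotomy $\gamma(E_R(R/\p))\in\{0,E_R(R/\p)\}$. This requires carefully identifying $\R\gamma$ on an injective module with $\gamma$ in degree zero and using Remark \ref{MiyachiRemark}(3) to upgrade membership in $\Im(\R\gamma)$ to an \emph{isomorphism} $\gamma(E_R(R/\p))\cong E_R(R/\p)$ of $R$-modules; then indecomposability of the injective hull is what promotes an abstract isomorphism to an equality of submodules. Once this is in place, the remaining argument is a routine reduction to finite direct sums of indecomposable injectives via Lemma \ref{subfunctor}(1) and the finite support of elements.
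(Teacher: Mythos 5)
Your proposal is correct, but the new implication you supply is not the one the paper proves: the paper closes the square with $(4)\Rightarrow(1)$, whereas you close it with $(4)\Rightarrow(2)$, and the arguments are genuinely different. The paper's proof only invokes condition (1) of Definition \ref{df-ALC}: from the stable t-structure it extracts, for each injective $E$, a triangle $\R\gamma(E)\to E\to V\to\R\gamma(E)[1]$ with $V\in\Ker(\R\gamma)$, identifies $V$ with $E/\gamma(E)$ (since $\R\gamma(E)=\gamma(E)\subseteq E$), and reads off $\gamma(E/\gamma(E))=H^0(\R\gamma(V))=0$; the radical condition for arbitrary $M$ then follows from $\gamma(M/\gamma(M))\subseteq\gamma(E(M)/\gamma(E(M)))$. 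Your route instead leans on condition (2) of the definition (the t-structure divides indecomposable injectives) together with Remark \ref{MiyachiRemark}(3) to get the dichotomy $\gamma(E_R(R/\p))\in\{0,E_R(R/\p)\}$, and then reassembles $\gamma(E)=E_1$ for a general injective $E$ by an element-chasing argument through finite subsums via Lemma \ref{subfunctor}(1) — essentially re-proving the assembly step of Proposition \ref{1 impies 2} without the torsion-theoretic closure properties used there, since those are only available once $\gamma$ is known to be a radical. Both arguments are sound; the paper's is shorter and shows that condition (2) of Definition \ref{df-ALC} is not even needed for $(4)\Rightarrow(1)$, while yours has the small virtue of making the module-level dichotomy on indecomposable injectives explicit and of bypassing the torsion-theory machinery. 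The one step worth stating more carefully in your write-up is the passage from the abstract isomorphism $\gamma(E_R(R/\p))\cong E_R(R/\p)$ in $\D^+(\RMod)$ to an isomorphism of modules, and from there (via injectivity of the submodule and indecomposability) to the equality of submodules — you do flag this, and the reasoning is valid.
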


\begin{proof}
The implications  $(1) \Rightarrow (2) \Rightarrow (3) \Rightarrow (1)$  and  $(3) \Rightarrow (4)$  are already proved respectively in Propositions \ref{1 impies 2} and \ref{ALC}, Examples \ref{exm-LC} and \ref{exm-alc}.  
We have only to prove (\rm 4) $\Rightarrow $(\rm 1). 

Assume that  $\R\gamma$  is an abstract local cohomology functor. 
We have to show that $\gamma(M/\gamma(M))=0$ for any $R$-module $M$. 
It is enough to show that $\gamma(E/\gamma(E))=0$ for any injective $R$-module $E$. 
In fact, for any $R$-module $M$, taking the injective hull $E(M)$ of $M$, we have  $\gamma(M/\gamma(M)) \subseteq \gamma(E(M)/\gamma(E(M)))$ by Lemma \ref{subfunctor} (\rm 1).

Note that the natural inclusion  $\gamma \subset \id$ of functors on $\RMod$  induces a natural morphism  $\phi : \R\gamma \to \id$   of functors on $\D^+(\RMod)$. 
Since  $(\Im (\R\gamma), \Ker (\R\gamma))$ is a stable t-structure on $\D^{+}(\RMod)$, it follows from the proof of Miyachi's Theorem \ref{Miyachi} that every injective $R$-module  $E$  is embedded in a triangle 
$$
\begin{CD}
\R\gamma (E) @>{\phi (E)}>> E @>>> V @>>> \R\gamma (E)[1], 
\end{CD}
$$
with $\R\gamma (E) \in \Im (\R\gamma)$ and $V \in \Ker (\R\gamma)$. 
Since $E$ is an injective $R$-module and since  $\R\gamma$  is the right derived functor of a left-exact functor,  $\R\gamma (E) = \gamma (E)$   is a submodule  of  $E$  via the morphism $\phi(E)$. 
Therefore we have  $V \cong  E/\gamma(E)$  in  $\D^+(\RMod)$. 
In particular, $H^0(\R\gamma(E/\gamma(E))) \cong H^0(\R\gamma(V)) = 0$. 
Since $\gamma$ is left exact functor, it is concluded that $\gamma(E/\gamma(E))=0$ as desired. 
This completes the proof of Theorem \ref{eq-thm}.
\end{proof}

\begin{rem}
\vspace{6pt}
\noindent
(\rm 1) 
The equivalences among the conditions (\rm 1), (\rm 2) and (\rm 3) in Theorem \ref{eq-thm} already appear in several literatures, but they are not explicitly written. 
A new and significant feature of Theorem \ref{eq-thm} is that they are equivalent as well to the condition~(4). 

\vspace{6pt}
\noindent
(\rm 2) 
It is well known that there is a one-to-one correspondence between the set of left exact radical functors and the set of Gabriel topologies (\cite[Chapter VI. Theorem 5.1]{S-1975}). 
Therefore, adding to Theorem \ref{eq-thm}, giving a left exact preradical functor on $\RMod$ satisfying one of the conditions (1)-(4) is equivalent to giving a Gabriel topology on the ring  $R$. 
\end{rem}


More generally than Theorem  \ref{eq-thm}, we are able to prove that every abstract local cohomology functor is the derived functor of a section functor with support in specialization-closed subset. 
Before proceeding to this theorem, we prepare lemmas that will be necessary for  its proof.

\begin{lem}\label{Bass Lemma}
Let  $X \in \D^+ (\RMod)$  and let  $W$ be a specialization-closed subset of  $\Spec (R)$. 

\begin{itemize}
\item[(1)]
$X \cong 0 \ \ \Longleftrightarrow \ \ \R\Hom _R (R/\p, X)_{\p} =0$  for all $\p \in \Spec (R)$. \vspace{2pt}
\item[(2)]
$X \in \Im (\R\G_W) \ \ \Longleftrightarrow \ \ \R\Hom _R (R/\q, X)_{\q} =0$  for all $\q \in \Spec (R)\backslash W$. \vspace{2pt}

\item[(3)]
$X \in \Ker (\R\G_W) \ \ \Longleftrightarrow \ \ \R\Hom _R (R/\p, X)_{\p} =0$  for all $\p \in W$. 
\end{itemize}
\end{lem}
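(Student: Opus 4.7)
The plan is to exploit the triangle equivalence $\D^{+}(\RMod) \cong \K^{+}(\Inj(R))$, so that any $X$ is represented by a left-bounded complex of injective $R$-modules, and to use the description in Example \ref{exm-alc}: if $X \in \Im(\R\G_W)$ (resp.\ $X \in \Ker(\R\G_W)$) then $X$ is so represented with components that are direct sums of $E_R(R/\p)$ with $\p \in W$ (resp.\ $\p \in \Spec(R)\backslash W$). The key local computation that I will use throughout is that, for any $\p,\q \in \Spec(R)$,
\[
\R\Hom_R(R/\q, E_R(R/\p))_{\q} = \Hom_R(R/\q, E_R(R/\p))_{\q}
\]
vanishes unless $\q = \p$, and is nonzero when $\q = \p$. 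Indeed $\R\Hom$ reduces to $\Hom$ because $E_R(R/\p)$ is injective; when $\q \neq \p$, either $\p \not\subseteq \q$, so that localization at $\q$ kills $E_R(R/\p)$ since $\Supp E_R(R/\p) = V(\p)$, or $\p \subsetneq \q$, in which case any $y \in \q \setminus \p$ acts as a unit on the $R_{\p}$-module $E_R(R/\p)$, forcing $\Hom_R(R/\q, E_R(R/\p)) = 0$.

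For part (1), the direction $\Rightarrow$ is immediate. For $\Leftarrow$, I would argue contrapositively: if $X \not\cong 0$, let $i$ be the smallest integer with $H^i(X) \neq 0$ and pick $\p \in \Ass(H^i(X))$, so that $R/\p$ embeds into $H^i(X)$. Applying $\R\Hom_R(R/\p,-)$ to the truncation triangle $H^i(X)[-i] \to X \to \tau_{\geq i+1}X \to H^i(X)[-i+1]$ (using $X \cong \tau_{\geq i}X$) and taking the long exact sequence in degree $i$, the outer terms vanish because $\tau_{\geq i+1}X$ has cohomology only in degrees $\geq i+1$. Thus $H^i(\R\Hom_R(R/\p, X)) \cong \Hom_R(R/\p, H^i(X))$; localizing at $\p$, this module contains the nonzero image of the inclusion $R/\p \hookrightarrow H^i(X)$, contradicting $\R\Hom_R(R/\p, X)_{\p} \cong 0$.

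For the $\Rightarrow$ directions of (2) and (3), the injective-complex representations from Example \ref{exm-alc} reduce the claim immediately to the key local computation, since the relevant $\q$ (resp.\ $\p$) differs from every prime appearing among the components of the resolution. For the $\Leftarrow$ directions I would invoke Miyachi's Theorem \ref{Miyachi} to obtain the triangle
\[
\R\G_W(X) \to X \to C \to \R\G_W(X)[1]
\]
with $\R\G_W(X) \in \Im(\R\G_W)$ and $C \in \Ker(\R\G_W)$. Applying $\R\Hom_R(R/\p, -)_{\p}$ to this triangle for each $\p$ and combining the hypothesis with the $\Rightarrow$ directions just established (applied to $\R\G_W(X)$ when $\p \notin W$ and to $C$ when $\p \in W$), I would deduce that the relevant term, namely $\R\Hom_R(R/\p, C)_{\p}$ for (2) or $\R\Hom_R(R/\p, \R\G_W(X))_{\p}$ for (3), vanishes for every $\p \in \Spec(R)$. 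Part (1) then forces $C \cong 0$ or $\R\G_W(X) \cong 0$ respectively, yielding $X \cong \R\G_W(X) \in \Im(\R\G_W)$ for (2), and $X \cong C \in \Ker(\R\G_W)$ for (3).

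The main obstacle I expect is the degree-$i$ cohomology computation in (1): one has to verify carefully that the vanishing of the complex $\R\Hom_R(R/\p, X)_{\p}$ forces the vanishing of the specific module $\Hom_R(R/\p, H^i(X))_{\p}$. Left-boundedness of $X$ and minimality of $i$ are precisely what ensure the edge entry $E_{2}^{0,i}$ of the hypercohomology spectral sequence has no incoming or outgoing differentials and survives to $E_{\infty}^{0,i} = H^i(\R\Hom_R(R/\p, X))$. Once (1) is in hand, parts (2) and (3) follow as essentially formal consequences of Miyachi's triangle decomposition and the injective-component description from Example \ref{exm-alc}.
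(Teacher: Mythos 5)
Your proposal is correct and follows essentially the same route as the paper: the vanishing $\Hom_R(R/\q,E_R(R/\p))_{\q}=0$ for $\p\neq\q$, the initial-cohomology/associated-prime argument for (1), the injective-complex descriptions of $\Im(\R\G_W)$ and $\Ker(\R\G_W)$ for the forward directions, and the Miyachi triangle plus part (1) for the converses. The only (harmless) difference is that for the converse of (2) you invoke the already-established forward direction of (3) applied to the cone $C$, where the paper instead reruns the associated-prime argument directly on $V\cong I/\G_W(I)$; your version is a slight streamlining of the same idea.
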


\begin{proof}
(1) Suppose $X \not\cong 0$. 
Since $X$  is a left bounded complex, there is an integer  $i_0$   such that  $H^{i}(X) =0$  for  $i<i_0$  and  $H^{i_0}(X) \not= 0$. 
Now take  $\p \in \Ass _R (H^{i_0}(X))$. 
Since $H^{i_0}(X)$  is the initial cohomology of $X$, we have isomorphisms of $R$-modules 
$$
H^{i_0} (\R\Hom _R (R/\p, X)_{\p}) \cong 
\Hom _{\D^+(\RMod)} (R/\p, X[i_0])_{\p} \cong 
\Hom _R (R/\p, H^{i_0}(X))_{\p}, 
$$
the last term of which is non-trivial.  
Therefore  $\R\Hom _R (R/\p, X)_{\p} \not= 0$.  

(2)
Recall from Example \ref{exm-alc} that $X$  belongs to  $\Im (\R\G_W)$  if and only if  $X$  is quasi-isomorphic to an injective complex whose components are direct sums of  $E_R(R/\p)$  with  $\p \in W$. 
Note that  
\begin{equation*}\label{Bass}
\Hom _R (R/\q, E_R(R/\p))_{\q} = 0 \ \ \text{if  $\ \p \not= \q$.}\quad (*)
\end{equation*}
Hence if  $X \in \Im (\R\G_W)$,   then it is easy to see that 
 $\R\Hom _R (R/\q, X)_{\q} = 0$  for any  $\q \in \Spec (R) \backslash W$. 

Conversely assume that  $\R\Hom _R (R/\q, X)_{\q} = 0$  for any  $\q \in \Spec (R) \backslash W$. 
Since  $(\Im (\R\G_W)$, $\Ker (\R\G_W))$  is a stable t-structure on  $\D^+(\RMod)$, there is a triangle 
$$
\begin{CD}
\R\G_W (X) @>{\phi (X)}>>  X @>>>  V @>>>  \R\G_W(X) [1],  
\end{CD}
$$
as in the proof of Miyachi's Theorem \ref{Miyachi}. 
Replacing $X$  with its injective resolution  $I$, the morphism  $\phi (X)$  is  isomorphic to the natural inclusion  $\G_W (I) \subset  I$. 
Hence  $V$ is isomorphic  in  $\D^+(\RMod)$  to the quotient complex  $I/\G_W(I)$, which is an injective complex whose components are direct sums of  $E_R(R/ \q)$  with  $\q \in \Spec (R) \backslash W$. 
Suppose  $V \not\cong 0$. 
Then, as in the proof of (1), we can take an associated prime ideal $Q$  of the initial cohomology  $H^{i_0}(V)$  of  $V$ and so  $\R\Hom _R (R/Q, V)_Q \not=0$. 
Since  $H^{i_0} (V)$  is a submodule of  a direct sum of injective modules  $E_R(R/\q)$ with  $\q \in \Spec (R) \backslash W$,   the associated prime  $Q$  equals one of those  $\q \in \Spec (R) \backslash W$. 
Since $\R\G_W(X)$  is in $\Im (\R\G_W)$, it follows from what we have proved in the first half of this proof and the assumption on $X$ that 
$\R\Hom _R (R/Q, X)_Q = \R\Hom _R (R/Q, \R\G_W (X))_Q = 0$, but this forces 
$\R\Hom _R (R/Q, V)_Q = 0$. 
This is a contradiction, hence we conclude  $V \cong 0$ and $X \cong \R\G_W (X) \in \Im (\R\G_W)$. 

(3)
Suppose  $\R\G_W (X) \cong 0$. 
Taking an injective resolution  $I$ of  $X$, we have  $\G_W (I)$  is a null complex and  $X$  is quasi-isomorphic to  $I/ \G_W (I)$. 
Replacing  $I$  with $I/\G_W(I)$ if necessary, we may assume that  $I$  consists of injective modules  $E(R/\q)$ with  $\q \in \Spec (R) \backslash W$. 
Therefore it follows from $(*)$ above that 
$\R\Hom _R (R/\p, X)_{\p} \cong  \Hom _R (R/\p, I)_{\p} =0$  for all $\p \in W$. 

\par
Conversely assume that $\R\Hom _R (R/\p, X)_{\p}=0$  for all $\p \in W$ and take a triangle
$$
\begin{CD}
\R\G_W (X) @>>>  X @>>>  V @>>>  \R\G_W(X) [1],  
\end{CD}
$$
as in the proof of (2). 
Then, since  $\R\G_W (V) \cong 0$, it follows from the first part of this proof that  $\R\Hom _R (R/\p, V)_{\p}=0$  for all $\p \in W$.
Hence we can deduce from the triangle that  $\R\Hom _R (R/\p, \R\G_W (X))_{\p}=0$  for all $\p \in W$ as well. 
On the other hand we know from (2) that  $\R\Hom _R (R/\p, \R\G_W (X))_{\p}=0$  even for $\p \in \Spec (R) \backslash W$. 
Thus (1) forces that  $\R\G_W (X) =0$, hence  $X \in \Ker (\R\G_W)$. 
\end{proof}

We have the following corollary as a result of this lemma, in which  $\R\G_{\m}$  denotes the right derived functor of the section functor with support in the closed (hence specialization-closed) subset  $V(\m) = \{ \m \}$.

\begin{cor}\label{Cor Bass Lemma}
Let  $(R, \m, k)$  be a noetherian local ring and let  $X \not\cong 0 \in \D^+(\RMod)$. 
If  $X \in \Im (\R\G_{\m})$, then  $\R\Hom _{R}(E_R(k), X) \not\cong 0$. 
\end{cor}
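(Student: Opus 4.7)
The plan is to combine Lemma \ref{Bass Lemma} with a contradiction argument via the tensor--Hom adjunction. First I would reduce to showing $\R\Hom_R(k, X) \not\cong 0$: by Lemma \ref{Bass Lemma}(1) and the hypothesis $X \not\cong 0$, some prime $\p$ satisfies $\R\Hom_R(R/\p, X)_\p \not\cong 0$, while by Lemma \ref{Bass Lemma}(2) with $W = \{\m\}$ the hypothesis $X \in \Im(\R\G_\m)$ forces $\R\Hom_R(R/\q, X)_\q \cong 0$ for all $\q \ne \m$. Hence $\p = \m$, and since $R$ is local so localization at $\m$ is trivial, we obtain $\R\Hom_R(k, X) \not\cong 0$.

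Next, suppose for contradiction that $\R\Hom_R(E_R(k), X) \cong 0$. The tensor--Hom adjunction in the derived category yields
\[
\R\Hom_R(k \otimes_R^L E_R(k),\, X) \;\cong\; \R\Hom_R\bigl(k,\, \R\Hom_R(E_R(k), X)\bigr) \;\cong\; 0.
\]
Each cohomology module $\mathrm{Tor}_i^R(k, E_R(k))$ is killed by $\m$, so is a $k$-vector space; since $k$ is a field, the complex $k \otimes_R^L E_R(k)$ splits in $\D^+(\RMod)$ as $\bigoplus_{i \ge 0} \mathrm{Tor}_i^R(k, E_R(k))[i]$. Provided some $\mathrm{Tor}_{i_0}^R(k, E_R(k))$ is nonzero --- say equal to $k^{(S)}$ with $S \neq \emptyset$ --- the object $\prod_{S} \R\Hom_R(k, X)[-i_0]$ appears as a direct factor of the displayed right-hand side, and this product is nonzero by the first step. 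This contradicts the vanishing above.

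The remaining step is to verify $k \otimes_R^L E_R(k) \not\cong 0$. For this I would apply the same adjunction with $E_R(k)$ itself in place of $X$:
\[
\R\Hom_R(k \otimes_R^L E_R(k),\, E_R(k)) \;\cong\; \R\Hom_R\bigl(k,\, \R\Hom_R(E_R(k), E_R(k))\bigr) \;\cong\; \R\Hom_R(k, \hat R),
\]
using that $E_R(k)$ is injective and $\Hom_R(E_R(k), E_R(k)) \cong \hat R$, the $\m$-adic completion of $R$. Since $\hat R$ is flat over $R$, flat base change identifies $\Ext_R^i(k, \hat R)$ with $\Ext_{\hat R}^i(k, \hat R)$, which is nonzero for $i = \depth \hat R$ (a finite number since $R$ is noetherian). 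Hence $\R\Hom_R(k, \hat R) \not\cong 0$, forcing $k \otimes_R^L E_R(k) \not\cong 0$.

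The main obstacle is finding the right conduit for the contradiction: a naive iteration based on $0 \to k \to E_R(k) \to E_R(k)/k \to 0$ produces only mutually-shifted isomorphisms among $\R\Hom_R(k, X)$ and $\R\Hom_R(E_R(k)/k, X)$ that are consistent with the left-boundedness of $X$ and yield no contradiction. Routing through the derived tensor $k \otimes_R^L E_R(k)$ is decisive because its cohomology is $k$-linear, which makes shifts of $\R\Hom_R(k, X)$ appear as actual direct summands --- not merely mapping targets --- of $\R\Hom_R(k \otimes_R^L E_R(k), X)$.
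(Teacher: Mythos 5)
Your proof is correct and follows essentially the same route as the paper's: both arguments turn on the adjunction $\R\Hom_R(k,\R\Hom_R(E_R(k),X))\cong\R\Hom_R(k\otimes_R^{\mathbf L}E_R(k),X)\cong\R\Hom_R(E_R(k),\R\Hom_R(k,X))$, on the nonvanishing of $\R\Hom_R(k,X)$ extracted from Lemma \ref{Bass Lemma}(1)(2), and on the Matlis-duality computation $\R\Hom_R(E_R(k),E_R(k))\cong\widehat R$ followed by $\Ext^{\depth\widehat R}_{\widehat R}(k,\widehat R)\neq 0$. The only organizational difference is which factor gets decomposed into shifts of $k$: the paper splits $\R\Hom_R(k,X)$ and reduces to showing $\R\Hom_R(E_R(k),k)\neq 0$, whereas you split $k\otimes_R^{\mathbf L}E_R(k)$ and therefore need the additional input $k\otimes_R^{\mathbf L}E_R(k)\not\cong 0$, which you obtain from the very same computation (indeed $\R\Hom_R(k\otimes_R^{\mathbf L}E_R(k),E_R(k))\cong\R\Hom_R(E_R(k),k)$ by the same adjunction). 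One small caveat: your justification for the splitting --- that the cohomology modules $\mathrm{Tor}_i^R(k,E_R(k))$ are killed by $\m$ --- is not by itself enough, since a complex with $k$-vector-space cohomology need not be formal over $R$; the correct reason is that $k\otimes_R^{\mathbf L}E_R(k)$ is represented by $k\otimes_R P$ for a projective resolution $P$ of $E_R(k)$, which is literally a complex of $k$-vector spaces and hence splits $k$-linearly (the same point the paper makes for $\R\Hom_R(k,X)=\Hom_R(k,I)$). With that repair the argument is complete.
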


\begin{proof}
Suppose $\R\Hom _{R}(E_R(k), X) = 0$. 
Then we have 
\begin{equation}\label{zero} 
\R\Hom _{R}(E_R(k), \R\Hom _R (k, X)) \cong \R\Hom _{R}(k, \R\Hom _R (E_R(k), X)) =0. 
\end{equation} 
Since  $X (\not\cong 0)$ belongs to  $\Im (\R\G_{\m})$, we note from Lemma \ref{Bass Lemma}(1)(2) that  $\R\Hom _R (k, X) $ $\not= 0$, which is a complex of $k$-vector spaces, and hence it is isomorphic to a direct sum of  $k[n] \ (n \in \Z)$ in  $\D^+(\RMod)$. 
Thus the equality (\ref{zero}) forces that  $\R\Hom _R (E_R(k), k) = 0$. 
Therefore we have only to prove that  $\R\Hom _R (E_R(k), k) \not= 0$  for a noetherian local ring  $(R,  \m,  k)$. 

By an obvious isomorphism  $\R\Hom _R (k, E_R(k)) \cong k$, we have 
$$
\begin{array}{ll}
\R\Hom _R (E_R(k), k) 
&\cong \R\Hom _R (E_R(k), \R\Hom _R (k, E_R(k))) \\
&\cong \R\Hom _R (k, \R\Hom _R (E_R(k), E_R(k))) \\
&\cong \R\Hom _R (k, \widehat{R}). \\
\end{array}
$$
Now let $F$  be a minimal free resolution of $k$  which belongs to $\D^-(\RMod)$. 
Then the last complex in the above isomorphism is isomorphic to the complex 
$\Hom _R (F, \widehat{R}) \cong \Hom _{\widehat{R}} (F \otimes _R \widehat{R}, \widehat{R})$. 
Since  $F \otimes _R \widehat{R}$  is a free resolution of  $k$  over  $\widehat{R}$,  we obtain an isomorphism 
$\R\Hom _R (E_R(k), k) \cong \R\Hom _{\widehat{R}}(k, \widehat{R})$, which is a nontrivial complex, as it is well-known that its $n$th cohomology module 
$\Ext_{\widehat{R}}^n(k, \widehat{R})$ is nontrivial if $n= \depth (\widehat{R})$. 
\end{proof}

\begin{lem}\label{ImKer}
As in the previous lemma, let  $X \in \D^+ (\RMod)$  and let  $W$ be a specialization-closed subset of  $\Spec (R)$. 

\begin{itemize}
\item[(1)]
If $X \in \Ker (\R\G_W )$  and $\R\Hom _R (X, E_R(R/\q))=0$  for all $\q \in \Spec (R) \backslash W$, then  $X \cong 0$. 
\vspace{2pt}
\item[(2)]
If $X \in \Im (\R\G_W)$  and $\R\Hom _R (E_R(R/\p), X)=0$  for all $\p \in W$, then $X \cong 0$. 
\vspace{2pt}
\end{itemize}
\end{lem}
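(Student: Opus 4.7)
The plan is to prove (1) via an explicit non-null-homotopic chain map arising from a minimal injective resolution, and to prove (2) by localizing at a suitably chosen prime so that Corollary \ref{Cor Bass Lemma} applies directly.

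For (1), I would replace $X$ by its minimal injective resolution $I$. Since $X \in \Ker(\R\G_W)$, Lemma \ref{Bass Lemma}(3) forces $\mu_n(\p, X)=0$ for every $\p \in W$, where $\mu_n$ denotes the Bass numbers, so every indecomposable summand of each $I^n$ is of the form $E_R(R/\q)$ with $\q \in \Spec(R)\setminus W$. Assume $X \not\cong 0$ and let $n_0$ be minimal with $I^{n_0}\neq 0$; by minimality of the resolution, $H^{n_0}(X)=\Ker(d^{n_0})$ is an essential submodule of $I^{n_0}$, hence nonzero, and any $\q \in \Ass(H^{n_0}(X))\subseteq \Ass(I^{n_0})$ lies in $\Spec(R)\setminus W$ and appears as a direct summand of $I^{n_0}$. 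The projection $\phi^{n_0}\colon I^{n_0}\to E_R(R/\q)$ onto one such summand, extended by zero in other degrees, is a chain map $\phi \colon I \to E_R(R/\q)[-n_0]$, the chain condition at degree $n_0-1$ being vacuous since $I^{n_0-1}=0$. A null-homotopy of $\phi$ would force $\phi^{n_0}$ to equal $h\circ d^{n_0}$ for some $h$ and hence to vanish on $\Ker(d^{n_0})$; but essentiality gives $\Ker(d^{n_0}) \cap E_R(R/\q)\neq 0$, and $\phi^{n_0}$ restricts to the identity on this intersection. Hence $\phi$ represents a nonzero class in $H^{-n_0}(\R\Hom_R(X, E_R(R/\q)))$, contradicting the hypothesis since $\q \in \Spec(R)\setminus W$.

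For (2), I would reduce to a local-ring situation. First observe that $\Supp(X) \subseteq W$: indeed $X$ is quasi-isomorphic to an injective complex whose components are direct sums of $E_R(R/\q)$ with $\q \in W$, each of which has support contained in $W$ by specialization-closedness, and supports are preserved by subquotients. Assuming $X \not\cong 0$, pick $\p$ to be a minimal element of $\Supp(X)$; such a $\p$ exists because every prime in the noetherian ring $R$ has finite height, so every descending chain in $\Supp(X)$ terminates. By minimality, $X_\q \cong 0$ for every $\q \subsetneq \p$, hence $\mu_n(\q, X)=0$ for every such $\q$ and all $n$. Consequently the localization $I_\p$ of the minimal injective resolution of $X$ has components only of the form $E_{R_\p}(k(\p))$, so $X_\p$ lies in $\Im(\R\G_{\p R_\p})$ over the local ring $R_\p$, and $X_\p \not\cong 0$ since $\p \in \Supp(X)$. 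Corollary \ref{Cor Bass Lemma} then yields $\R\Hom_{R_\p}(E_{R_\p}(k(\p)), X_\p) \not\cong 0$, and the standard adjunction $\R\Hom_R(E_R(R/\p), X) \cong \R\Hom_{R_\p}(E_{R_\p}(k(\p)), X_\p)$, which follows from $E_R(R/\p) = E_{R_\p}(k(\p))$ being an $R_\p$-module and the fact that localization preserves injective resolutions, transfers this to a contradiction with the hypothesis.

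The main obstacle in (2) is that for a generic $\p \in W$ the localization $X_\p$ need not lie in $\Im(\R\G_{\p R_\p})$; extra summands $E_{R_\p}(R_\p/\q R_\p)$ with $\q \subsetneq \p$ and $\q \in W$ can persist, and Corollary \ref{Cor Bass Lemma} genuinely requires the purely maximal-support form. Choosing $\p$ to be minimal in $\Supp(X)$ is precisely the device that eliminates those unwanted lower summands after localization and makes the corollary applicable.
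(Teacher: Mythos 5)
Part (1) of your proposal is correct and is in substance the paper's own argument: both replace $X$ by an injective complex whose components involve only $E_R(R/\q)$ with $\q\notin W$ (your Bass-number formulation of this is exactly Lemma \ref{Bass Lemma}(3)), and both detect nonvanishing of $\R\Hom_R(X,E_R(R/\q))$ in the degree of the initial cohomology for $\q$ an associated prime of $H^{n_0}(X)$. Your explicit chain-map and null-homotopy computation merely unwinds the isomorphism $H^{-n_0}(\R\Hom_R(X,E_R(R/\q)))\cong\Hom_R(H^{n_0}(X),E_R(R/\q))$ that the paper invokes directly.

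Part (2) has a genuine gap at the transfer step. The claimed isomorphism $\R\Hom_R(E_R(R/\p),X)\cong\R\Hom_{R_\p}(E_R(R/\p),X_\p)$ is false: localizing the minimal injective resolution $I$ of $X$ at $\p$ deletes every summand $E_R(R/\q)$ with $\q\not\subseteq\p$, but for $\q\supsetneq\p$ one has $\Hom_R(E_R(R/\p),E_R(R/\q))\neq 0$ (the composite $R/\p\twoheadrightarrow R/\q\hookrightarrow E_R(R/\q)$ extends over $R/\p\hookrightarrow E_R(R/\p)$), so those summands contribute to $\R\Hom_R(E_R(R/\p),X)$ and are discarded on the right-hand side; already $X=E_R(R/\m)$ with $\p\subsetneq\m$ gives a nonzero left side and a zero right side. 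Choosing $\p$ minimal in $\Supp(X)$ eliminates, as you say, the summands indexed by primes \emph{below} $\p$, but it does nothing about the primes \emph{above} $\p$, which generically do occur in $I$; hence the hypothesis $\R\Hom_R(E_R(R/\p),X)=0$ does not descend to $X_\p$, and the contradiction with Corollary \ref{Cor Bass Lemma} is not reached. To repair your route you would have to show in addition that $\R\Hom_R(E_R(R/\p),\R\G_U(X))=0$ for $U=\{\q\mid\q\not\subseteq\p\}$, since $X_\p$ is the cone of $\R\G_U(X)\to X$ under your minimality assumption, and that vanishing is not available. The paper resolves exactly this difficulty by going in the opposite direction: it chooses $P$ \emph{maximal} with $\R\Hom_R(R/P,X)_P\neq 0$ and replaces $X$ not by $X_P$ but by $Y=\R\Hom_R(R/P,X)$. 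Maximality kills the contribution of primes strictly containing $P$, the hypothesis transfers to $Y$ through the legitimate adjunction $\R\Hom_R(E_R(R/P),\R\Hom_R(R/P,X))\cong\R\Hom_R(R/P,\R\Hom_R(E_R(R/P),X))=0$, and $Y$ is already a complex of $R_P$-modules concentrated at $P$, so Corollary \ref{Cor Bass Lemma} applies without any localization of $\R\Hom$.
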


\begin{proof}
(1) 
Assume  that $X \in \Ker (\R\G_W )$  and $\R\Hom _R (X, E_R(R/\q))=0$  for all $\q \in \Spec (R) \backslash W$. 
Then, as in the proof of Lemma \ref{Bass Lemma} (3), $X$ is isomorphic in  $\D^+(\RMod)$  to an injective complex whose components are direct sums of  $E_R(R/\q)$ with  $\q \in \Spec (R) \backslash W$. 
Suppose that  $X \not\cong 0$. 
Then the initial nontrivial cohomology $H^{i_0}(X)$  has an associated prime ideal $\q$  which belongs to  $\Spec (R) \backslash W$,  and  $\Hom _{R}(H^{i_0}(X), E_R(R/\q))$ $ \not= 0$  for such a $\q$. 
Since  $E_R (R/ \q)$ is an injective module, note that 
$$H^{-i_0} (\R\Hom _R (X, E_R(R/\q))) \cong \Hom _R(H^{i_0} (X), E_R(R/ \q)),$$ 
hence this is a nontrivial module. 
This contradicts to that  $\R\Hom _R (X, E_R(R/\q))=0$.

(2)
Assume $X \in \Im (\R\G_W)$  and $\R\Hom _R (E_R(R/\p), X)=0$  for all $\p \in W$. 
Suppose  $X \not\cong 0$  and we shall show a contradiction. 
It follows from Lemma \ref{Bass Lemma}(1)(2) that there is a prime ideal $P \in W$  such that  $\R\Hom _R(R/P, X)_P \not= 0$. 
Take such a $P$ as maximal among these prime ideals  and set  $Y = \R\Hom _R (R/P, X)$. 
Let  $Q \in \Spec (R)$. 
If  $P \not\subset Q$, then  $(R/P)_Q =0$, hence 
$$
Y_Q \cong  \R\Hom _R (R/P, X)_Q  \cong 
\R\Hom _{R_Q} ((R/P)_Q, X_Q) = 0. 
$$
(We should notice that $\R\Hom _R (R/P, - )$  commutes with taking localization,   since $R/P$ is a finitely generated $R$-module. )
Thus  
$$\R\Hom _R (R/Q, Y)_Q = \R\Hom _{R_Q} ((R/Q)_Q, Y_Q) = 0$$ for all $Q \in \Spec (R) \backslash V(P)$, hence we have  $Y \in \Im (\R\G_{V(P)})$ by Lemma \ref{Bass Lemma}(2). 
Thus, as in the proof of Lemma \ref{Bass Lemma}(2), $Y$  is isomorphic to a complex  which consists of injective modules of the form  $E_R(R/\p)$ with  $\p \in V(P)$. 
On the other hand, if  $P \subsetneqq Q$, then we have 
$$
\begin{array}{rl}
\R\Hom _R (R/Q, Y)_Q &\cong 
\R\Hom _{R} \left( R/Q, \R\Hom _{R} (R/P, X) \right)_Q \\
&\cong 
\R\Hom _{R} \left( R/P,  \R\Hom _R (R/Q, X) \right)_Q,  \\
&\cong 
\R\Hom _{R_Q} \left( (R/P)_Q,  \R\Hom _R (R/Q, X)_Q \right),  \\
\end{array}
$$
where we notice that  $\R\Hom _R (R/Q, X)_Q =0$ by the maximality of  $P$. 
Therefore we have  $\R\Hom _R (R/Q, Y)_Q = 0$ for all $Q \in V(P)\backslash \{ P\}$. 
Setting  $W' = V(P)\backslash \{ P\}$, we see that  $W'$  is a specialization-closed subset of $\Spec (R)$. 
It follows from Lemma \ref{Bass Lemma}(3) that  $Y \in \Ker (\R\G_{W'})$. 
As a result, as in the proof of Lemma \ref{Bass Lemma}(3), we have that $Y$ is isomorphic to an injective complex consisting of direct sums of copies of  $E_R(R/P)$. 

Now we note that  $\R\Hom _R(E_R(R/P), Y)=0$. 
In fact, this is isomorphic to 
$$
\R\Hom _R(E_R(R/P), \R\Hom _R(R/P, X)) \cong \R\Hom _R(R/P, \R\Hom _R(E_R(R/P), X)),
$$
which vanishes by the assumption. 
Note also that  $E_R(R/P)$  has a structure of  $R_P$-module. 
As we have shown above,  $Y$  is isomorphic to a complex $I$  consisting of direct sums of  $E_R(R/P)$.   
In general, the equality  $\Hom _R(M, N) = \Hom _{R_P}(M, N)$  holds for $R_P$-modules  $M$ and $N$. 
This equality extends to complexes and we can see that  $I$  has a structure of complex over  $R_P$. 
Therefore  we have isomorphisms
$$
\begin{array}{rl}
\R\Hom _R (E_R(R/P), Y) &\cong \Hom _R (E_R(R/P), I) \\
 & = \Hom _{R_P} (E_R(R/P), I) \\ 
&\cong \R\Hom _{R_P} (E_R(R/P), Y).  \\
\end{array}
$$
To sum up we have such a situation that 
 $Y (\not\cong 0) \in \D^+ (R_P\text{-}\mathrm{Mod})$  belongs to  $\Im (\R\G_{PR_P})$  and  $\R\Hom _{R_P} (E_R(R/P), Y) =0$. 
But this contradicts Corollary \ref{Cor Bass Lemma}. 
\end{proof}

Now we are able to prove the following theorem, which is a main result of this section.

\begin{thm}\label{thm-ALC}
Given an abstract local cohomology functor  $\delta$  on  $\D^+(\RMod)$, 
 there exists a specialization-closed subset  $W \subseteq \Spec (R)$ such that 
 $\delta$ is isomorphic to the right derived functor $\R\G_W$ of the section functor $\G_W$.
\end{thm}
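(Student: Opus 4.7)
The natural choice of $W$ is forced by axiom (2) of Definition \ref{df-ALC}: since every indecomposable injective $E_R(R/\p)$ lies in exactly one of $\Im(\delta)$ or $\Ker(\delta)$, define
$$
W \;=\; \{\p \in \Spec(R) \mid E_R(R/\p) \in \Im(\delta)\},
$$
so that $E_R(R/\p) \in \Ker(\delta)$ for $\p \in \Spec(R)\setminus W$. The first step is to verify that $W$ is specialization-closed: if $\p \subseteq \q$, then composing the projection $R/\p \twoheadrightarrow R/\q$ with the inclusion $R/\q \hookrightarrow E_R(R/\q)$ and extending along $R/\p \hookrightarrow E_R(R/\p)$ via injectivity yields a nonzero morphism $E_R(R/\p) \to E_R(R/\q)$. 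If we had $\p \in W$ but $\q \notin W$, this would contradict the vanishing $\Hom_{\D^+}(\Im(\delta),\Ker(\delta))=0$ provided by the stable t-structure.

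Once $W$ is identified, the remainder of the argument is to show $\Im(\delta) = \Im(\R\G_W)$. Given this equality, Miyachi's Theorem together with uniqueness of the right adjoint to the inclusion (Remark \ref{MiyachiRemark}(2)) yields $\delta \cong i\circ\rho \cong \R\G_W$. I would prove both inclusions by the same pattern, using Lemma \ref{ImKer} to kill a remainder term of a triangle.

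For $\Im(\delta) \subseteq \Im(\R\G_W)$, take $X \in \Im(\delta)$ and consider the triangle
$$
\R\G_W(X) \;\longrightarrow\; X \;\longrightarrow\; V \;\longrightarrow\; \R\G_W(X)[1]
$$
furnished by the stable t-structure $(\Im(\R\G_W),\Ker(\R\G_W))$, so $V \in \Ker(\R\G_W)$. For any $\q \in \Spec(R)\setminus W$ and $n \in \Z$, the outer terms $\R\G_W(X) \in \Im(\R\G_W)$ and $X \in \Im(\delta)$ each have zero $\Hom_{\D^+}(-,E_R(R/\q)[n])$ by the corresponding t-structure, so the induced long exact sequence forces $\Hom_{\D^+}(V,E_R(R/\q)[n]) = 0$ for all $n$, i.e. $\R\Hom_R(V,E_R(R/\q))=0$. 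Lemma \ref{ImKer}(1) then gives $V \cong 0$, hence $X \cong \R\G_W(X) \in \Im(\R\G_W)$.

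For the reverse inclusion, take $X \in \Im(\R\G_W)$ and use the triangle from $(\Im(\delta),\Ker(\delta))$:
$$
\delta(X) \;\longrightarrow\; X \;\longrightarrow\; V' \;\longrightarrow\; \delta(X)[1].
$$
By the first inclusion, $\delta(X) \in \Im(\delta) \subseteq \Im(\R\G_W)$, and since $X \in \Im(\R\G_W)$ which is a triangulated subcategory, $V' \in \Im(\R\G_W)$ as well; meanwhile $V' \in \Ker(\delta)$. Now apply $\Hom_{\D^+}(E_R(R/\p),-)$ for $\p \in W$ to the triangle: the outer terms vanish for the same t-structural reasons, so $\R\Hom_R(E_R(R/\p),V')=0$ for every $\p \in W$, and Lemma \ref{ImKer}(2) forces $V' \cong 0$. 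Hence $X \cong \delta(X) \in \Im(\delta)$, completing the proof. The main technical obstacle is matching up the two t-structures on suitable ``test objects'' (the indecomposable injectives $E_R(R/\p)$) so that Lemma \ref{ImKer} applies cleanly; once one sets up the two triangles in parallel, the rest is a direct exchange argument.
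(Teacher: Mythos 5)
Your choice of $W$, the specialization-closedness argument, the reduction to proving $\Im(\delta)=\Im(\R\G_W)$ via uniqueness of right adjoints, and your proof of the inclusion $\Im(\delta)\subseteq\Im(\R\G_W)$ all coincide with the paper's proof (its 1st through 4th steps). Where you genuinely diverge is the reverse inclusion. The paper forms a second triangle $\R\G_W(Y)\to Y\to V\to\R\G_W(Y)[1]$ on the cone $Y$ of $\delta(X)\to X$, applies Lemma \ref{ImKer}(2) to $\R\G_W(Y)$ to conclude $Y\in\Ker(\R\G_W)$, and then runs a splitting argument to obtain $\delta(X)\cong X$. You instead observe that $\Im(\R\G_W)$ is a triangulated subcategory containing both $\delta(X)$ (by the already-proved inclusion) and $X$, so the cone $V'$ lies in $\Im(\R\G_W)\cap\Ker(\delta)$, and you apply Lemma \ref{ImKer}(2) directly to $V'$. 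This is a legitimate and in fact shorter route: it dispenses with the second triangle and the splitting argument.

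However, your stated justification for the key vanishing $\R\Hom_R(E_R(R/\p),V')=0$ is incorrect. You apply $\Hom(E_R(R/\p),-)$ to the triangle $\delta(X)\to X\to V'\to\delta(X)[1]$ and assert that ``the outer terms vanish for the same t-structural reasons.'' They do not: for $\p\in W$ the object $E_R(R/\p)$ lies in $\Im(\delta)\cap\Im(\R\G_W)$, while $\delta(X)\in\Im(\delta)$ and $X\in\Im(\R\G_W)$, so neither $\Hom(E_R(R/\p)[n],\delta(X))$ nor $\Hom(E_R(R/\p)[n],X)$ has any reason to vanish (take $X=E_R(R/\p)$ itself for a counterexample). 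Fortunately no long exact sequence is needed at this point: since $E_R(R/\p)[n]\in\Im(\delta)$ for all $n$ and $V'\in\Ker(\delta)$, the vanishing $\Hom(E_R(R/\p)[n],V')=0$ is immediate from condition (i) of the stable t-structure $(\Im(\delta),\Ker(\delta))$. With that one-line repair, your argument for the reverse inclusion is complete and correct.
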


\begin{proof} 
In this proof we denote  $\T = \D^{+}(\RMod)$.  
Suppose that $\delta : \T \to \T$ is an abstract local cohomology functor. 
It then follows that it gives a stable t-structure $(\Im (\delta) , \Ker (\delta))$ on $\T$. 
We divides the proof into several steps. 

\vspace{4pt}
\noindent
(1st step) : 
Consider the subset $W=\{ \p \in \Spec (R) \mid E_{R}(R/\p) \in \Im (\delta) \} $ of $\Spec(R)$. 
Then $W$ is a specialization-closed subset. 

To see this, we have only to show that $E_{R}(R/\p) \in \Im (\delta)$ implies $E_{R}(R/\q) \in \Im (\delta)$  for prime ideals $\p \subseteq \q$. 
Assume contrarily that there are prime ideals  $\p \subseteq \q$  so that  $E_{R}(R/\p) \in \Im (\delta)$ but $E_{R}(R/\q) \not\in \Im (\delta)$. 
Since the t-structure $(\Im (\delta), \Ker (\delta))$ divides indecomposable injective modules, we must have $E_{R}(R/\q) \in \Ker (\delta)$.
Then, from the definition of t-structures, we have  $\Hom_{\T}(E_{R}(R/\p), E_{R}(R/\q))=0$, which says that there are no nontrivial $R$-module homomorphisms from $E_{R}(R/\p)$  to  $E_{R}(R/\q)$. 
However, a natural nontrivial map $R/\p \rightarrow R/\q \hookrightarrow E_{R}(R/\q)$ extends to a non-zero map $E_{R}(R/\p) \to E_{R}(R/\q)$. 
This is a contradiction, hence it is proved that  $W$  is specialization-closed.$\blacksquare$ 

\vspace{4pt}

Our final goal is, of course,  to show the isomorphism  $\delta \cong \R\G_W$. 
Notice that, since the both functors  $\delta$ and $\R\G_W$  are abstract local cohomology functors, we have two stable t-structures $(\Im (\delta), \Ker (\delta))$  and  $(\Im (\R\G_W), \Ker (\R\G_W))$  on  $\T$.

\vspace{4pt}
\noindent
(2nd step) : Note that if $\p \in W$, then  $E_R(R/\p) \in \Im (\delta) \cap \Im (\R\G_W)$. 
On the other hand, if  $\q \in \Spec (R) \backslash W$, then 
$E_R(R/\q) \in \Ker (\delta) \cap \Ker (\R\G_W)$. 

This is clear from the definition of $W$. $\blacksquare$

\vspace{4pt}
\noindent
(3rd step) : To prove the theorem, it is enough to show that  $\Im (\delta ) =  \Im (\R\G_W)$. 

In fact, by Miyachi's Theorem \ref{Miyachi}, an abstract local cohomology functor  $\delta$  (resp. $\R\G_W$) is uniquely determined by the full subcategory  $\Im (\delta)$ (resp. $\Im (\R\G_W)$). 
See also Remark \ref{MiyachiRemark}(2). $\blacksquare$

\vspace{4pt}
\noindent
(4th step) : Now we prove the inclusion $\Im (\delta ) \subseteq  \Im (\R\G_W)$.

To do this, assume $X \in \Im (\delta)$. 
Then there is a triangle in  $\T$ ;  $\R\G_W (X) \to X \to V \to \R\G_W (X)[1]$,   where  $V \in \Ker (\R\G_W)$. 
Let  $\q$ be an arbitrary element of $\Spec (R) \backslash W$. 
Since $(\Im (\delta), \Ker (\delta))$  and  $(\Im (\R\G_W), \Ker (\R\G_W ))$ are stable t-structures  and since  $E_R(R/ \q)$ belongs to $\Ker (\delta) \cap \Ker (\R\G_W)$, it follows that 
$$
\Hom _{\T} (X, E_R(R/\q)[n]) = \Hom _{\T} (\R\G_W (X), E_R(R/\q)[n]) =0
$$  
for any integer $n$. 
Then by the above triangle we have $\Hom _{\T} (V, E_R(R/\q)[n]) = 0$ for any $n$. 
This is equivalent to that  $\R\Hom _R (V, E_R(R/\q)) \cong 0$. 
In fact, the $n$-th cohomology module of $\R\Hom _R (V, E_R(R/\q))$  is just 
 $\Hom _{\T} (V, E_R(R/\q)[n]) = 0$. 
Since  $V \in \Ker (\R\G_W)$, Lemma \ref{ImKer}(1) forces $V \cong 0$, therefore  $X \cong \R\G_W (X)$. 
Hence we have  $X \in \Im (\R\G_W)$ as desired. $\blacksquare$

\vspace{4pt}
\noindent
(5th step) : For the final step of the proof, we show the inclusion $\Im (\delta ) \supseteq  \Im (\R\G_W)$. 

Let  $X \in \Im (\R\G_W)$. 
Then there are triangles 
$\delta (X) \to X \to Y \to \delta (X)[1]$  with  $Y \in \Ker (\delta)$, and 
$\R\G_W (Y) \to Y \to V \to \R\G_W (Y)[1]$  with  $V \in \Ker (\R\G_W)$. 
Let  $\p$ be an arbitrary prime ideal belonging to $W$. 
Similarly to the proof of the 4th step, since  $E_R(R/\p) \in \Im (\delta) \cap \Im (\R\G_W)$, we see that 
$$
\Hom _{\T} (E_R(R/\p)[n], Y) = \Hom _{T} (E_R(R/\p)[n], V) =0
$$  
for any integer $n$, hence we have 
$\Hom _{\T} (E_R(R/\p)[n], \R\G_W (Y)) = 0$ for any $n$. 
This shows $\R\Hom _R (E_R(R/\p), \R\G_W (Y)) = 0$, then by Lemma \ref{ImKer}(2) we have  $\R\G_W (Y) =0$. 
Thus  $Y \in \Ker (\R\G_W)$. 
Then, since  $(\Im (\R\G_W), \Ker (\R\G_W))$  is a stable t-structure, 
the morphism  $X \to Y$  in the triangle $\delta (X) \to X \to Y \to \delta (X)[1]$  is zero. 
It then follows that  $\delta (X) \cong X \oplus Y[-1]$.
Since there is no nontrivial morphisms  $\delta (X) \to  Y[-1]$ in $\T$,  it is concluded that  $\delta (X) \cong X$, hence  $X \in \Im (\delta)$ as desired, and the proof is completed. 
\end{proof}

\section{Lattice structure of the set of abstract local cohomology functors}


For a given commutative noetherian ring  $R$  we are considering the following sets. 

\begin{df}\label{Sec}

$(1)$ 
We denote by $\Sec (R)$ the set of all left exact radical functors on $\RMod$. 

$(2)$ 
We denote by $\ALC (R)$ the set of the isomorphism classes $[\delta]$  where $\delta$  ranges over all abstract local cohomology functors  $\D^+(\RMod) \to \D^+(\RMod)$ .

$(3)$
We denote by $\sp (R)$  the set of all specialization-closed subsets of $\Spec (R)$. 
\end{df}

All these sets are bijectively corresponding to one another. 
Actually we can define mappings among these sets. 
First of all, by using Definition \ref{W}, we are able to give a mapping 
$$
\Sec (R) \longrightarrow \sp (R) \ \ ; \ \  \gamma \mapsto W_{\gamma},  
$$
which has the inverse mapping  
$$
\sp (R) \longrightarrow \Sec (R) \ \ ; \ \ W \mapsto  \G_W. 
$$
See Proposition \ref{ALC} and Theorem \ref{eq-thm}.
We also have a mapping
$$
\Sec (R) \longrightarrow \ALC (R) \ \ ; \ \  \gamma \mapsto [\R{\gamma}],  
$$
which is surjective by Theorem \ref{thm-ALC}. 
It is injective as well. 
In fact,  since  $\gamma (M) = H^0(\R\gamma (M))$  for $\gamma \in \Sec (R)$  and $M \in \RMod$, $\gamma$  is uniquely determined by  $\R\gamma$. 

To sum up we have the following result as a corollary of Theorems \ref{eq-thm} and \ref{thm-ALC}. 

\begin{cor}\label{bijection} 
The mapping  $W \mapsto  \G_W$  (resp.  $\gamma \mapsto [\R{\gamma}]$)  gives a bijection   $\sp (R) \to \Sec (R)$ (resp. $\Sec (R) \to \ALC (R)$). 
\end{cor}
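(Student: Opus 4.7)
The plan is to verify the two bijections separately, in each case by exhibiting an explicit inverse map and checking that both compositions are the identity. All the substantive content is already present in Theorem \ref{eq-thm} and Theorem \ref{thm-ALC}; the proof of the corollary amounts to organizing these facts and checking that the evident candidate inverses really do invert.

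For the map $W \mapsto \G_W$ from $\sp(R)$ to $\Sec(R)$, well-definedness is Example \ref{exm-LC}. The candidate inverse is the assignment $\gamma \mapsto W_{\gamma}$ of Definition \ref{W}, which is legitimate on all of $\Sec(R)$ because every left exact radical functor preserves injectivity by Proposition \ref{1 impies 2}. That $\gamma \mapsto W_{\gamma} \mapsto \G_{W_{\gamma}}$ recovers $\gamma$ is exactly Proposition \ref{ALC}, combined with Lemma \ref{subfunctor}(4) to upgrade the equality of subfunctors of $\id$ to the identity of maps in $\Sec(R)$. In the other direction, $\p \in W_{\G_W}$ holds iff $\G_W(R/\p)=R/\p$, iff every element of $R/\p$ has support contained in $W$, iff $V(\p)\subseteq W$; and since $W$ is specialization-closed, this is equivalent to $\p\in W$. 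So $W_{\G_W}=W$, proving this is a bijection.

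For the map $\gamma \mapsto [\R\gamma]$ from $\Sec(R)$ to $\ALC(R)$, the equivalence $(1)\Leftrightarrow (4)$ in Theorem \ref{eq-thm} gives well-definedness: if $\gamma\in\Sec(R)$ then $\R\gamma$ is genuinely an abstract local cohomology functor. Surjectivity is the content of Theorem \ref{thm-ALC}: every abstract local cohomology functor $\delta$ is isomorphic to $\R\G_W$ for some $W\in\sp(R)$, and $\G_W\in\Sec(R)$ by Example \ref{exm-LC}. For injectivity, suppose $\R\gamma_1 \cong \R\gamma_2$ as triangle functors. Applying $H^0$ to this natural isomorphism and using that each $\gamma_i$ is left exact gives a natural isomorphism $\gamma_1\cong\gamma_2$ of functors on $\RMod$; then Lemma \ref{subfunctor}(4) upgrades this to the equality $\gamma_1=\gamma_2$ of subfunctors of $\id$.

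The main obstacle, such as it is, lies in the injectivity step for the second bijection: one has to be careful that an isomorphism $\R\gamma_1\cong \R\gamma_2$ in the category of triangle functors on $\D^+(\RMod)$ really does produce, via $H^0$, an isomorphism of the underlying functors on $\RMod$, and not merely an isomorphism in some larger category. Once this is in place, Lemma \ref{subfunctor}(4) closes the argument without further effort. Everything else in the proof is a direct citation of Theorem \ref{eq-thm} and Theorem \ref{thm-ALC}.
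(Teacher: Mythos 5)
Your proof is correct and follows essentially the same route as the paper: the inverse $\gamma\mapsto W_{\gamma}$ together with Proposition \ref{ALC} for the first bijection, and Theorem \ref{thm-ALC} for surjectivity plus the observation $\gamma(M)=H^0(\R\gamma(M))$ for injectivity of the second. Your extra care in invoking Lemma \ref{subfunctor}(4) to pass from the isomorphism $\gamma_1\cong\gamma_2$ to the equality $\gamma_1=\gamma_2$ is a welcome refinement of a point the paper leaves implicit, but it is not a different argument.
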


Note that  $\G _{\Spec (R)} = \id$  and  $\G_{\emptyset} = {\mathbf 0}$ (the zero functor).

\begin{rem}\label{cap}
\vspace{6pt}
\noindent
(\rm 1)\ 
Recall that a subcategory of a triangulated category is said to be thick if it is a triangulated subcategory and is closed under taking direct summands. 

M. J. Hopkins gave the following theorem in \cite{H-1985}. 
Let $P(R)$ denote the thick subcategory of  $\D (\RMod)$  consisting of all the complexes which are quasi-isomorphic to bounded complexes of finitely generated projective $R$-modules. 
Then there are bijective mappings 
$$
\left\{\begin{matrix} \text{thick subcategories } \cr \text{ of }P(R) \cr \end{matrix} \right\} 
\begin{matrix} {\longrightarrow} \cr {\longleftarrow}  \cr \end{matrix}  
\left\{ \begin{matrix} \text{specialization-closed} \cr \text{subsets of $\Spec (R)$} \cr \end{matrix} \right\}. 
$$
Therefore, taking Corollary \ref{bijection} into account, the set set  $\Sec (R)$  bijectively corresponds to the set of thick subcategories of  $P(R)$.

\vspace{6pt}
\noindent
(\rm 2)\ 
There are bijective maps among the following three sets: 
$\Sec (R)$, the set of hereditary torsion theories on $\RMod$  and 
the set of specialization-closed subsets of $\Spec (R)$. 
These bijections have already appeared in the papers of M.\ H.\ Bijan-Zadeh \cite{BZ} and P.\ Cahen \cite{Ca}. 
(We should note that a torsion theory in their papers means a hereditary one in our sense.)
\end{rem}


Let  $\gamma _{1}, \gamma _{2} \in \Sec (R)$. 
It is easy to see that  $\gamma _{1} \subseteq \gamma _{2}$ as functors if and only if $W_{\gamma _1} \subseteq W_{\gamma_2}$ as subsets of  $\Spec (R)$. 
Hence the one-to-one correspondence in Corollary \ref{bijection} preserves the inclusion relation. 

Recall that a partially ordered set is called a lattice if every couple of elements have a least upper bound and a greatest lower bound, 
and a lattice is called complete if every subset has a least upper bound and a greatest lower bound.
 
If  $\{ W_{\lambda} \mid \lambda \in \Lambda \}$  is a set of specialization-closed subsets of $\Spec (R)$, then  $\bigcap _{\lambda} W _{\lambda}$ and $\bigcup _{\lambda} W_{\lambda}$ are also closed under specialization. 
By this reason $\sp (R)$ is a complete lattice.  
In view of Corollary \ref{bijection} we can define  $\bigcap$  and  $\bigcup$  for any subsets of $\Sec (R)$. 
Actually, if  $\{ \gamma _{\lambda} \mid \lambda \in \Lambda\}$  is a set of elements in $\Sec (R)$, then $\gamma := \bigcap_{\lambda} \gamma _{\lambda}$  (resp.  $\delta := \bigcup_{\lambda} \gamma _{\lambda}$)  is well-defined as an element of $\Sec (R)$  so that  $W_{\gamma} = \bigcap _{\lambda} W_{\gamma_{\lambda}}$ (resp.  $W_{\delta} = \bigcup_{\lambda} W_{\gamma _{\lambda}}$). 
In this way we have shown that $\Sec (R)$ has a structure of complete lattice and the bijective mapping $\sp (R) \to \Sec (R)$  in Corollary \ref{bijection} gives an isomorphism as lattices.

We can define a lattice  structure as well on the set  $\ALC (R)$ so that  the bijection $\ALC (R) \cong \Sec (R)$  is an isomorphism as complete lattices. 
More precisely,  we define the order on  $\ALC (R)$  by 
$$
[\R\gamma _1] \subseteq [\R\gamma _2] \ \Longleftrightarrow \ \gamma _1 \subseteq \gamma _2
$$ 
for  $\gamma _1, \gamma _2 \in \Sec (R)$. 
Notice that  
$
\bigcap_{\lambda} [\R\gamma _{\lambda}] = [\R(\bigcap_{\lambda} \gamma _{\lambda})]$, and 
$\bigcup_{\lambda} [\R\gamma _{\lambda}] = [\R(\bigcup_{\lambda} \gamma _{\lambda})]. 
$

Summing all up we have the following result. 

\begin{thm}\label{ALCbijection}
The mapping  $\Sec (R) \to \ALC (R)$ which maps $\gamma$  to  $[\R\gamma]$ 
(resp.~$\sp (R) \to \ALC (R)$  which sends $W$  to  $[\R\G_W]$) gives an isomorphism of complete lattices. 
\end{thm}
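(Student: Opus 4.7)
The theorem is a formal packaging of Corollary \ref{bijection}, Theorem \ref{thm-ALC}, and the lattice structure already established on $\sp(R)$; accordingly I would organise the proof as a sequence of three verifications rather than as a new construction.

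First I would reduce everything to $\sp(R) \to \Sec(R)$. The map $W \mapsto \G_W$ is bijective by Corollary \ref{bijection}, with inverse $\gamma \mapsto W_\gamma$ supplied by Definition \ref{W} and Proposition \ref{ALC}. I would check that it is order-preserving in both directions: the implication $W_1 \subseteq W_2 \Rightarrow \G_{W_1} \subseteq \G_{W_2}$ is immediate from the formula $\G_W(M) = \{x \in M \mid \Supp(Rx) \subseteq W\}$, and the converse uses the recovery formula $W_\gamma = \{\p \in \Spec(R) \mid \gamma(R/\p) = R/\p\}$: if $\G_{W_1} \subseteq \G_{W_2}$ and $\p \in W_1$, then $R/\p = \G_{W_1}(R/\p) \subseteq \G_{W_2}(R/\p) \subseteq R/\p$, so $\p \in W_2$.

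Second, I would put the complete lattice structure on $\sp(R)$ and transport it. Arbitrary intersections and unions of specialization-closed subsets of $\Spec(R)$ are again specialization-closed, so $\sp(R)$ is a complete lattice under inclusion, with meet $\bigcap_\lambda W_\lambda$ and join $\bigcup_\lambda W_\lambda$. Transporting along the order-isomorphism of the previous step, $\Sec(R)$ becomes a complete lattice with $\bigcap_\lambda \gamma_\lambda := \G_{\bigcap_\lambda W_{\gamma_\lambda}}$ and $\bigcup_\lambda \gamma_\lambda := \G_{\bigcup_\lambda W_{\gamma_\lambda}}$, which matches the definitions given just before the theorem.

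Third, I would handle $\Sec(R) \to \ALC(R)$. Bijectivity is Theorem \ref{thm-ALC} together with injectivity coming from the recovery $\gamma(M) = H^0(\R\gamma(M))$, so the isomorphism class $[\R\gamma]$ determines $\gamma$ uniquely. Since the partial order on $\ALC(R)$ is defined by $[\R\gamma_1] \subseteq [\R\gamma_2] \Longleftrightarrow \gamma_1 \subseteq \gamma_2$, the map $\gamma \mapsto [\R\gamma]$ is by fiat an order-isomorphism, and hence transports the complete lattice structure of $\Sec(R)$ to $\ALC(R)$, yielding the displayed formulas $\bigcap_\lambda[\R\gamma_\lambda] = [\R(\bigcap_\lambda \gamma_\lambda)]$ and $\bigcup_\lambda[\R\gamma_\lambda] = [\R(\bigcup_\lambda \gamma_\lambda)]$. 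Composing with the isomorphism from step one gives the lattice isomorphism $\sp(R) \cong \ALC(R)$ via $W \mapsto [\R\G_W]$.

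The only step with any actual mathematical content is the verification that $W \mapsto \G_W$ reflects inclusions, and even that is essentially a one-line check using $W_\gamma$; so I do not anticipate a genuine obstacle. Most of the work is bookkeeping, and the substantive content has already been carried by Theorem \ref{thm-ALC}.
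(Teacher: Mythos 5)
Your proposal is correct and follows essentially the same route as the paper, which likewise derives the theorem from Corollary \ref{bijection}, the completeness of the lattice $\sp(R)$ under arbitrary unions and intersections, and the fact that the order on $\ALC(R)$ is defined by transport so that $\gamma\mapsto[\R\gamma]$ is an order-isomorphism by construction. The one substantive check you single out (that $W\mapsto\G_W$ reflects inclusions, via $W_\gamma=\{\p\mid\gamma(R/\p)=R/\p\}$) is exactly the observation the paper records just before the theorem.
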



\section{Closure operation and quotients}

\begin{df}\label{def-closure}
Let $\gamma$ be a preradical functor on $R$-Mod, which is not necessarily a left exact radical functor. 
We can define the closure (or the cover)  $\bar {\gamma }$ of $\gamma$ in  $\Sec (R)$  as the smallest left exact radical functor containing $\gamma$. 
By virtue of Remark \ref{cap}, $\bar{\gamma}$  is the intersection of all the left exact radical functors which contain $\gamma$. 
$$ \bar {\gamma } = \bigcap_{\gamma \subseteq \gamma' \in \Sec(R)} \gamma'. 
$$
\end{df}

For a preradical functor $\gamma$, we define a subset of $\Spec (R)$  by the following: 
$$
W_{\gamma}:=\{ \p \in \Spec (R) \mid \exists \q \subseteq \p \text{ s.t. } \gamma (R/\q)\neq 0 \}, 
$$
which is clearly closed under specialization. 
Note that this generalizes the definition of  $W_{\gamma}$  for a left exact radical functor $\gamma$ in Definition \ref{W}. 
In fact, if  $\gamma \in \Sec (R)$, then this definition of  $W_{\gamma}$ agrees  with Definition \ref{W}.

\begin{prop}\label{closure}
Let $\gamma$ be a preradical functor. 
\begin{itemize}
\item[(\rm 1)]
Then $\G_{W_{\gamma}}\subseteq \bar{\gamma}$.

\item[(\rm 2)]
If, in addition, $\gamma$ is left exact, then $\bar{\gamma}= \G_{W_{\gamma}}$.
\end{itemize}
\end{prop}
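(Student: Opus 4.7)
For part (1), the strategy is to verify the inclusion $\G_{W_{\gamma}} \subseteq \gamma'$ for every left exact radical functor $\gamma'$ containing $\gamma$; intersecting over all such $\gamma'$ then yields the result. By Theorem~\ref{eq-thm}, any such $\gamma'$ is of the form $\G_{W'}$ with $W' = W_{\gamma'}$ in the sense of Definition~\ref{W}, so it suffices to prove $W_{\gamma} \subseteq W_{\gamma'}$. Given $\p \in W_{\gamma}$, I would pick $\q \subseteq \p$ with $\gamma(R/\q) \neq 0$; the inclusion $\gamma \subseteq \gamma'$ gives $\gamma'(R/\q) \neq 0$, and Proposition~\ref{1 impies 2} together with Lemma~\ref{basic}(2) forces $\gamma'(R/\q) = R/\q$, placing $\q$ in $W_{\gamma'}$. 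Since $W_{\gamma'}$ is specialization-closed and $\q \subseteq \p$, we conclude $\p \in W_{\gamma'}$, as required.

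For part (2) the remaining task is the reverse containment $\bar{\gamma} \subseteq \G_{W_{\gamma}}$. Because $\G_{W_{\gamma}}$ lies in $\Sec(R)$ by Example~\ref{exm-LC}, it is one of the members of the intersection defining $\bar{\gamma}$, so the real content is the containment $\gamma \subseteq \G_{W_{\gamma}}$. Fix $x \in \gamma(M)$; the goal is $\Supp(Rx) \subseteq W_{\gamma}$. Using left exactness and Lemma~\ref{subfunctor}(1) applied to $Rx \subseteq M$, I obtain $\gamma(Rx) = Rx \cap \gamma(M) \ni x$, and since $\gamma(Rx)$ is a submodule of $Rx$ containing a generator, this forces $\gamma(Rx) = Rx$. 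For each $\q \in \Ass(Rx)$, choosing a submodule $N \cong R/\q$ of $Rx$ and invoking Lemma~\ref{subfunctor}(1) once more yields $\gamma(R/\q) \cong \gamma(N) = N \cap \gamma(Rx) = N \neq 0$, so $\q \in W_{\gamma}$. Finally, because $Rx$ is finitely generated, every $\p \in \Supp(Rx)$ contains some associated prime of $Rx$, and specialization-closure of $W_{\gamma}$ (immediate from its definition) then gives $\p \in W_{\gamma}$.

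The main point where care is needed is the step $\gamma(Rx) = Rx$: it relies jointly on left exactness (to rewrite $\gamma(Rx) = Rx \cap \gamma(M)$ via Lemma~\ref{subfunctor}(1)) and on the submodule property of $\gamma(Rx)$ combined with $x$ being a generator. This is precisely the asymmetry between (1) and (2): without left exactness of $\gamma$ one has no mechanism for transferring the information ``$x \in \gamma(M)$'' into statements about $\gamma(R/\q)$ for $\q \in \Ass(Rx)$, which is exactly what is needed to push $\Supp(Rx)$ into $W_{\gamma}$.
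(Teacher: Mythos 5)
Your proposal is correct. Part (1) is essentially the paper's own argument: reduce to showing $W_{\gamma}\subseteq W_{\gamma'}$ for every $\gamma'\in\Sec(R)$ containing $\gamma$, using that $\gamma(R/\q)\neq 0$ forces $\gamma'(R/\q)\neq 0$ and then specialization-closure; your extra invocation of Proposition~\ref{1 impies 2} and Lemma~\ref{basic}(2) to upgrade $\gamma'(R/\q)\neq 0$ to $\gamma'(R/\q)=R/\q$ is harmless (the paper instead just uses the generalized definition of $W_{\gamma'}$, which only asks for nonvanishing). Part (2), however, takes a genuinely different route. The paper proves $\gamma\subseteq\G_{W_{\gamma}}$ by a four-stage reduction: first the observation that $\gamma(E_R(R/\p))=0$ for $\p\notin W_{\gamma}$ via the essential extension $R/\p\subseteq E_R(R/\p)$, then finite direct sums of indecomposable injectives, then finitely generated modules by intersecting with the injective hull, and finally arbitrary modules via the criterion $x\in\gamma(M)\Leftrightarrow\gamma(Rx)=Rx$. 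You keep only the last step and replace the entire injective-module machinery with associated primes: from $x\in\gamma(M)$ and Lemma~\ref{subfunctor}(1) you get $\gamma(Rx)=Rx$, then for each $\q\in\Ass(Rx)$ a copy of $R/\q$ inside $Rx$ gives $\gamma(R/\q)\neq 0$, hence $\Ass(Rx)\subseteq W_{\gamma}$, and since the minimal primes of $\Supp(Rx)$ are associated primes and $W_{\gamma}$ is specialization-closed, $\Supp(Rx)\subseteq W_{\gamma}$. This is shorter and more elementary (no injective hulls, no decomposition theory), at the cost of leaning on the noetherian facts $\Min(\Supp(Rx))\subseteq\Ass(Rx)$ and the existence of $R/\q$-submodules; the paper's route has the side benefit of recording the behaviour of $\gamma$ on injectives, which it reuses elsewhere. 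Both arguments correctly isolate left exactness as the hypothesis that makes (2) work.
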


\begin{proof}
(1) 
By virtue of Corollary \ref{bijection}, it is sufficient to prove that 
$W_{\gamma} \subseteq W_{\bar{\gamma}}$.

Suppose  $\p\in W_{\gamma}$ and $\gamma \subseteq \gamma' \in \Sec (R)$. 
Then there is a prime ideal $\q \subseteq \p$ such that $\gamma (R/\q)\neq 0$. 
Since  $\gamma (R/\q) \subseteq \gamma' (R/\q)$,  we have  $\gamma' (R/\q)\neq 0$, hence $\q \in W_{\gamma'}$. 
Since $W_{\gamma'}$ is specialization closed, we have $\p \in W_{\gamma'}$. 
This shows that  $W_{\gamma} \subseteq W_{\gamma'}$  for any $\gamma' \in \Sec (R)$  which contains $\gamma$. 
Thus $W_{\gamma} \subseteq \bigcap_{\gamma \subseteq \gamma' \in \Sec(R)} W_{\gamma'} = W_{\bar{\gamma}}$.

\noindent
$(\rm 2)$
We shall prove ${\gamma} \subseteq \G_{W_{\gamma}}$. 
This is enough to show $(2)$.
In fact, if $\G_{W_{\gamma}}$  is a left exact radical functor containing $\gamma$, then by (1) it is the minimum among such functors, hence  $\G_{W_{\gamma}} = \bar{\gamma}$. 
Now we prove that  
\begin{equation}\label{subseteq}
\gamma (M) \subseteq \G_{W_{\gamma}} (M), 
\end{equation}
for all $M \in \RMod$. 

First of all, we note that  $\gamma (E_R(R/\p)) = 0$ unless $\p \in W_{\gamma}$. 
In fact, if  $\gamma (R/\p) =0$, then applying Lemma \ref{subfunctor}(1) to  $R/\p  \subseteq E_R(R/\p)$ we have  $R/\p \cap \gamma (E_R(R/\p)) = 0$. 
Since  $R/\p  \subseteq E_R(R/\p)$ is an essential extension, it follows that $\gamma (E_R(R/\p)) =0$. 

Secondly, we prove the equation (\ref{subseteq}) in the case that $M$ is a finite direct sum of indecomposable injective $R$-modules $\bigoplus _{i=1}^{n} E_{R}(R/\p_{i})$. 
In this case, by what we remarked above, we have $\gamma(M) = \bigoplus _{\p_i \in W_{\gamma}} \gamma(E_{R}(R/\p_{i}))$  and this is a submodule of $\bigoplus _{\p_i \in W_{\gamma}}E_{R}(R/\p_{i}) = \G_{W_{\gamma}}(M)$. 
Thus the claim is true in this case. 

Thirdly,  we consider the case that $M$ is a finitely generated $R$-module. 
Since the injective hull $E_R(M)$ of $M$  is a finite direct sum of indecomposable injective modules, we have already shown that $\gamma (E_{R}(M)) \subseteq \G_{W_{\gamma}} (E_{R}(M))$. 
Thus, using Lemma \ref{subfunctor}(1), we have  $\gamma (M) = M \cap \gamma (E_{R}(M)) \subseteq M \cap \G_{W_{\gamma}} (E_{R}(M)) = \G_{W_{\gamma}}(M)$. 

Finally, we show the claim (\ref{subseteq}) for an $R$-module $M$ without any assumption. 
We should notice that an element  $x\in M$ belongs to $\gamma (M)$ if and only if  the equality $\gamma (Rx)=Rx$ holds. 
(See Lemma \ref{subfunctor}(1). Also see the proof of Proposition \ref{ALC}.)
This equivalence is true for the left exact radical functor  $\G_{W_{\gamma}}$  as well. 
So $x \in M$  belongs to $\G_{W_{\gamma}} (M)$ if and only if $\G_{W_{\gamma}}(Rx) = Rx$. 
Since the claim (\ref{subseteq}) is true for finitely generated $R$-module $Rx$, 
we have $\gamma (Rx) \subseteq \G_{W_{\gamma}} (Rx)$. 
Therefore,  we conclude that if $x \in \gamma (M)$, then $x \in \G_{W_{\gamma}}(M)$. 
Hence $\gamma (M) \subseteq \G_{W_{\gamma}} (M)$. 
\end{proof}

\begin{exm}
(1)
Let $(R, \m)$ be a local artinian ring with  $\m \not= 0$. 
Then, since  $\Spec (R)= \{ \m\}$, there are only two subsets of $\Spec (R)$ which are closed under specialization, namely  $\emptyset$  and $\Spec (R)$. 
Therefore,  by Corollary \ref{bijection}, we have  $\Sec (R) = \{ \id, {\mathbf 0} \}$, where ${\mathbf 0}$ denotes the zero functor. 
We define a functor $\gamma : \RMod \to \RMod$  by $\gamma (M) = \m M$  for all $M \in \RMod$  and  $\gamma (f) = f| _{\m M} : \m M \to \m N$ for all $f \in \Hom _R(M, N)$. 
It is clear that $\gamma$  is a non-zero functor and $\gamma \subseteq \id$. 
Therefore it follows from the definition that  $\bar{\gamma} = \id$. 
However, since  $\gamma (R/\m) = 0$,  we have  $W_{\gamma} = \emptyset$ and hence  $\G_{W_{\gamma}} = {\mathbf 0}$. 
Thus  $\bar{\gamma} \not= \G_{W_{\gamma}}$  in this case. 
Note that $\gamma$  is not a left exact functor.

\noindent
(2)
Let  $I$ be an ideal of $R$. 
Then $\Hom_R (R/I, -)$ is a left exact preradical functor. 
It follows that 
$W_{\Hom_R(R/I, -)}$  is the set of prime ideals containing $I$, which is a closed subset of $\Spec (R)$ denoted by $V(I)$. 
We denote $\G_I = \G_{V(I)}$. 
Thus we obtain from Proposition \ref{closure} the equality 
$$
\overline{\Hom _R(R/I, -) } = \G_{I}. 
$$
\end{exm}

\vspace{12pt}

We can show from Lemma \ref{subfunctor}(2) that the set  $\Sec (R)$  admits multiplication. 
 
\begin{lem}\label{multiplication}
If  $\gamma _1$, $\gamma _2 \in \Sec (R)$, then $\gamma _1 \cdot \gamma _2 = \gamma _2 \cdot \gamma _1 \in \Sec (R)$. 
\end{lem}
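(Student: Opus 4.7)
The plan is to separate the statement into two assertions: first, the commutativity $\gamma_1 \cdot \gamma_2 = \gamma_2 \cdot \gamma_1$; second, the fact that this common functor lies in $\Sec(R)$. The commutativity is a direct quotation of Lemma \ref{subfunctor}(2): since $\gamma_1$ and $\gamma_2$ are left exact subfunctors of $\id$, that lemma gives the pointwise identities
\[
\gamma_1 \cdot \gamma_2(M) \;=\; \gamma_1(M) \cap \gamma_2(M) \;=\; \gamma_2 \cdot \gamma_1(M)
\]
for every $M \in \RMod$, hence the equality of functors.

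The remaining work is to exhibit $\gamma_1 \cdot \gamma_2$ as an element of $\Sec(R)$. The cleanest route is to invoke the characterization $(1)\Leftrightarrow(3)$ of Theorem \ref{eq-thm} together with the bijection of Corollary \ref{bijection}: write $\gamma_i = \G_{W_i}$ for specialization-closed subsets $W_i \subseteq \Spec(R)$. Then by the defining formula of section functors in Example \ref{exm-LC},
\[
\gamma_1 \cdot \gamma_2(M) \;=\; \G_{W_1}(M) \cap \G_{W_2}(M) \;=\; \{\, x \in M \mid \Supp(Rx) \subseteq W_1 \cap W_2 \,\} \;=\; \G_{W_1 \cap W_2}(M),
\]
so $\gamma_1 \cdot \gamma_2 = \G_{W_1 \cap W_2}$. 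Since the intersection of two specialization-closed subsets is again specialization-closed, this functor belongs to $\Sec(R)$ by another application of Theorem \ref{eq-thm}, completing the proof.

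There is no serious obstacle; the argument is essentially a reassembly of results already proved. If one preferred a more direct verification avoiding the correspondence, one would instead check that the composition is a left-exact subfunctor of $\id$ (left exactness from composing left-exact functors, and subfunctor-of-$\id$ from Remark \ref{additive}(2)), and then verify the radical property $\gamma_1\gamma_2(M/\gamma_1\gamma_2(M)) = 0$ via a short diagram chase using the intersection description and the radical property of $\gamma_1, \gamma_2$ separately. This alternative is mildly more laborious, which is why I would favor the approach through Theorem \ref{eq-thm}.
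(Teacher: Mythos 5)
Your proof is correct, and the commutativity half is exactly the paper's argument (a citation of Lemma \ref{subfunctor}(2)). For membership in $\Sec(R)$ you take a genuinely different route. The paper verifies the criterion of Theorem \ref{eq-thm}(2) directly: the composite is a left exact preradical functor, and it preserves injectivity because $\gamma_2(I)$ is injective and then so is $\gamma_1(\gamma_2(I))$; Theorem \ref{eq-thm} then upgrades this to ``radical.'' You instead use the classification side of Theorem \ref{eq-thm} and Corollary \ref{bijection} to write $\gamma_i=\G_{W_i}$, and compute from Lemma \ref{subfunctor}(2) and the formula in Example \ref{exm-LC} that $\gamma_1\cdot\gamma_2=\G_{W_1\cap W_2}$, which lies in $\Sec(R)$ since $W_1\cap W_2$ is specialization-closed. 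Both arguments are legitimate at this point in the paper. The paper's is shorter and does not need the full classification; yours buys the extra identification of the product with intersection of supports, i.e.\ $W_{\gamma_1\cdot\gamma_2}=W_{\gamma_1}\cap W_{\gamma_2}$, which is precisely the fact invoked later in Lemma \ref{quotient} (``$\gamma\cdot\gamma_2=\gamma_1$ if and only if $W_\gamma\cap W_{\gamma_2}=W_{\gamma_1}$''), so your version makes that subsequent step explicit. The only cosmetic point is that equality of the two functors on objects suffices for equality as functors because both are subfunctors of $\id$ (Remark \ref{additive}(2)); you gesture at this in your alternative sketch, and it is worth one sentence in the main argument.
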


\begin{proof}
It is easy to see that if  $\gamma _1$  and  $\gamma_2$  are left exact preradical functor, then so is  $\gamma _1 \cdot \gamma_2$. 
If  $\gamma _1$, $\gamma _2 \in \Sec (R)$, and if  $I$  is an injective $R$-module, then, since  $\gamma _2 (I)$  is injective as well, we see that  $\gamma _1 \cdot \gamma _2 (I)$  is also injective. 
Thus $\gamma _1 \cdot \gamma _2 \in \Sec (R)$. 
The commutativity of multiplication follows from Lemma \ref{subfunctor}(2).  
\end{proof}

We can also define the \lq quotient' in $\Sec (R)$.

\begin{lem}\label{quotient}
Let $\gamma _1$, $\gamma _2 \in \Sec (R)$. 
Suppose that $\gamma _1 \subseteq \gamma _2$. 
Then the set 
$S_{\gamma _1, \gamma _2} = \{\gamma \in \Sec (R) \mid \gamma \cdot \gamma _2 = \gamma _1 \}$ 
has a unique maximal element with respect to inclusion relation.
\end{lem}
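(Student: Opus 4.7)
The plan is to translate the problem through the lattice isomorphism between $\Sec(R)$ and $\sp(R)$ established in Corollary~\ref{bijection} and Theorem~\ref{ALCbijection}, reducing it to an elementary statement about specialization-closed subsets of $\Spec(R)$.

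First I would verify that $S_{\gamma_1, \gamma_2}$ is non-empty: by Lemma~\ref{subfunctor}(2), $\gamma_1 \cdot \gamma_2(M) = \gamma_1(M) \cap \gamma_2(M)$, and the assumption $\gamma_1 \subseteq \gamma_2$ forces this intersection to equal $\gamma_1(M)$, so $\gamma_1 \in S_{\gamma_1, \gamma_2}$. Next, I would establish the key translation: for any $\gamma, \gamma' \in \Sec(R)$ with associated specialization-closed subsets $W_\gamma, W_{\gamma'}$, the composition corresponds to intersection, i.e.\ $\gamma \cdot \gamma' = \G_{W_\gamma \cap W_{\gamma'}}$. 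This follows because Lemma~\ref{subfunctor}(2) gives $\gamma \cdot \gamma'(M) = \G_{W_\gamma}(M) \cap \G_{W_{\gamma'}}(M)$, and an element $x \in M$ lies in the right-hand side precisely when $\Supp(Rx) \subseteq W_\gamma \cap W_{\gamma'}$.

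Writing $W_i = W_{\gamma_i}$, the problem is therefore reduced to showing that
\[
\mathcal{S} = \{\, W \in \sp(R) \mid W \cap W_2 = W_1 \,\}
\]
has a unique maximal element under inclusion. The crucial observation is that $\mathcal{S}$ is closed under arbitrary unions: a union of specialization-closed subsets is specialization-closed, and the equation $W \cap W_2 = W_1$ distributes over unions since
\[
\Bigl(\bigcup_\alpha W_\alpha\Bigr) \cap W_2 \;=\; \bigcup_\alpha (W_\alpha \cap W_2) \;=\; \bigcup_\alpha W_1 \;=\; W_1.
\]
Consequently $W_{\max} := \bigcup_{W \in \mathcal{S}} W$ is itself a member of $\mathcal{S}$, and by construction it is the unique maximum.

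Transporting back via Corollary~\ref{bijection}, the functor $\G_{W_{\max}}$ is the unique maximal element of $S_{\gamma_1, \gamma_2}$. I do not expect a real obstacle in this argument; the only subtle point is the identification of composition in $\Sec(R)$ with intersection in $\sp(R)$, which however is immediate from Lemma~\ref{subfunctor}(2) and the explicit form of $\G_W$ given in Example~\ref{exm-LC}. If desired, one may also give a concrete description of $W_{\max}$ as $\{\,\p \in \Spec(R) \mid V(\p) \cap W_2 \subseteq W_1\,\}$, which is readily checked to be specialization-closed and to coincide with the union above.
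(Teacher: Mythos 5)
Your proposal is correct and follows essentially the same route as the paper: transfer the problem to $\sp(R)$ via Corollary \ref{bijection}, observe that membership in $S_{\gamma_1,\gamma_2}$ translates to $W_\gamma \cap W_{\gamma_2} = W_{\gamma_1}$, and take the union of all such $W$, which remains in the family because intersection distributes over unions. The explicit description $W_{\max}=\{\p \mid V(\p)\cap W_2 \subseteq W_1\}$ is a correct bonus not present in the paper.
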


\begin{proof}
First of all we should notice from Lemma \ref{subfunctor}(2) and from the assumption  $\gamma _1 \subseteq \gamma _2$   that  $S_{\gamma _1, \gamma _2}$  contains  $\gamma _1$, hence $S_{\gamma _1, \gamma _2}$ is non-empty. 
By virtue of Corollary \ref{bijection}, an element $\gamma \in \Sec (R)$  belongs to $S_{\gamma _1, \gamma _2}$  if and only if  $W_{\gamma} \cap W_{\gamma _2} = W_{\gamma _1}$. 
Therefore, setting $W= \bigcup _{\gamma \in S_{\gamma _1, \gamma _2}} W_{\gamma}$, we can see that it satisfies  $W \cap W_{\gamma _2} = W_{\gamma _1}$. 
It is clear that  $W$  is a unique maximal subset of $\Spec (R)$ which is closed under specialization and satisfies  $W \cap W_{\gamma _2} = W_{\gamma _1}$. 
Thus $\G _W$ is a unique maximal element in $S_{\gamma _1, \gamma _2}$. 
\end{proof}

\begin{df}\label{def-quotient}
For $\gamma _1$, $\gamma _2 \in \Sec (R)$ with $\gamma _1 \subseteq \gamma _2$,  we denote by $\gamma _1/\gamma _2$ the unique maximal element of $S_{\gamma _1, \gamma _2}$ in Lemma \ref{quotient} and call it the quotient of $\gamma _1$  by  $\gamma _2$. 
\end{df}

It is easy to verify  that  $\gamma /\id = \gamma$  for  all $\gamma  \in \Sec (R)$,  and  ${\mathbf 0}/\gamma  = {\mathbf 0}$  if $\gamma \not= {\mathbf 0} \in \Sec (R)$. 
(Note from the definition that  ${\mathbf 0}/{\mathbf 0} = \id$.)
 
By virtue of Theorem \ref{ALCbijection} we can also define the quotients for abstract local cohomology functors in  $\ALC (R)$. 

\begin{df}\label{def-quotientALC}
Let $\delta _1$, $\delta  _2$ be abstract local cohomology functors on  $\D^+(\RMod)$  and assume that $[\delta _1] \subseteq [\delta _2]$  in the lattice structure of  $\ALC (R)$. 
Then, by Theorem \ref{ALCbijection}, there are  $\gamma _1,  \gamma _2 \in \Sec (R)$  such that  $\delta _i \cong \R\gamma _i \ (i=1,2)$  and  $\gamma _1 \subseteq \gamma _2$  in  $\Sec (R)$. 
Under these circumstances we define the abstract local cohomology functor  $\delta _1/\delta _2$  to be the the right derived functor $\R (\gamma _1/\gamma _2)$  of  $\gamma _1/\gamma _2 \in \Sec (R)$. 
We call  $\delta _1 / \delta _2$  the quotient of  $\delta _1$  by  $\delta _2$.\end{df}

\section{Characterization of $\G_I$  and  $\G_{I, J}$}

We are concerned with the following two types of subsets in $\Spec (R)$  which are closed under specialization, and their corresponding left exact radical functors.

\begin{df}
\vspace{6pt}
\noindent
(\rm 1)\ 
Let  $I$  be an ideal of  $R$ and set $V(I) = \{ \p \in \Spec (R) \mid \p \supseteq I \}$. 
It is known that  $V(I)$  is a closed subset of $\Spec (R)$  and conversely every closed subset is of this form. 
We set  $\G_I := \G_{V(I)}$ the corresponding left exact radical functor, which we refer to as the section functor with the closed support defined $I$.
We denote the right derived functor of  $\G_I$ by  $\R\G_I$, which we call the local cohomology functor with the closed support defined by $I$.  
See \cite{BS}.

\vspace{6pt}
\noindent
(\rm 2)\ 
Let  $I, J$  be a pair of ideals of  $R$. 
We set
$$
W(I, J) = \{ \p \in \Spec (R) \mid I^n \subseteq \p + J \ \ \text{for some}\  n>0\}, 
$$
which is closed under specialization. 
The corresponding left exact radical functor  $\G_{W(I,J)}$  is denoted by $\G_{I,J}$, which is called the section functor defined by the pair of ideals $I, J$. We also denote the right derived functor of  $\G_{I, J}$  by  $\R\G_{I,J}$, which we call the (generalized) local cohomology functor defined by the pair $I, J$  of ideals. 
See \cite{TYY}. 
\end{df}

Note that, since  $\G_I, \G_{I, J} \in \Sec (R)$,  the derived functors  $\R \G_I$  and $\R\G_{I, J}$  are abstract local cohomology functors. 

The aim of this section is to characterize $\G_I$  and  $\G_{I,J}$  as elements of  $\Sec (R)$, by which we will be able to characterize $\R\G_I$  and  $\R\G_{I,J}$  as elements of  $\ALC (R)$. 

We start with the following observation.

\begin{lem}\label{closed}
Let  $W \subseteq  \Spec (R)$ be closed under specialization. 
We set  $\Min (W)$  to be the set of prime ideals which are minimal among the primes in $W$, i.e.
$$
\Min (W) = \{ \p \in W \mid  \text{if} \ \q \subseteq \p \ \text{for} \ \q \in W  \text{, then} \ \q = \p \}. 
$$
Then  $W = \bigcup _{\p \in \Min (W)} V(\p)$. 
Furthermore  $W$  is a closed subset of  $\Spec (R)$  if and only if  $\Min (W)$  is a finite set.
\end{lem}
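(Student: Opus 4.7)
The plan is to split the statement into its two assertions and handle them independently, each by a short noetherian argument.

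For the equality $W=\bigcup_{\p\in\Min(W)}V(\p)$, I would prove the two inclusions separately. The inclusion $\supseteq$ is immediate: if $\p\in\Min(W)\subseteq W$ and $\q\in V(\p)$, i.e.\ $\q\supseteq\p$, then specialization-closedness of $W$ forces $\q\in W$. For the inclusion $\subseteq$, given $\q\in W$, I would produce some $\p\in\Min(W)$ with $\p\subseteq\q$, i.e.\ $\q\in V(\p)$. The key tool here is that $R$ is noetherian, so the localization $R_\q$ is a noetherian local ring, which has finite Krull dimension; therefore any strictly descending chain of primes of $R$ contained in $\q$ terminates. Applied to the subposet $\{\p'\in W\mid \p'\subseteq\q\}$ (non-empty since it contains $\q$), this yields a minimal element $\p$, which automatically lies in $\Min(W)$.

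For the second assertion, the easy direction is: if $\Min(W)=\{\p_1,\dots,\p_n\}$ is finite, then by the first part $W=V(\p_1)\cup\cdots\cup V(\p_n)=V(\p_1\cap\cdots\cap\p_n)$, which is closed in $\Spec(R)$. For the converse, suppose $W$ is closed, so $W=V(I)$ for some ideal $I$ of $R$. Then $\Min(W)$ is precisely the set of prime ideals minimal with respect to containing $I$, i.e.\ the minimal primes of $I$. Since $R$ is noetherian, this set is finite (for instance by primary decomposition, or by finiteness of $\Ass_R(R/I)$).

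I do not expect any genuine obstacle; the only subtle point is ensuring that minimal elements of $\{\p'\in W\mid\p'\subseteq\q\}$ actually exist, which is where the noetherianity of $R_\q$ enters. A direct Zorn's lemma argument on the intersection of a descending chain of primes is also available (the intersection of a chain of primes is again prime and is still contained in $\q$, but one has to verify it lies in $W$; since every element of the chain lies in $W$ and $W$ is only specialization-closed, the intersection need not a priori lie in $W$, so the finite-length argument via $\dim R_\q<\infty$ is the cleaner route).
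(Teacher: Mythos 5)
Your proof is correct and follows essentially the same route as the paper: both directions of the equality $W=\bigcup_{\p\in\Min(W)}V(\p)$, finiteness of the minimal primes of $I$ when $W=V(I)$, and the identity $V(\p_1)\cup\cdots\cup V(\p_n)=V(\p_1\cap\cdots\cap\p_n)$ for the converse. The only difference is that you explicitly justify the existence of a minimal element of $\{\p'\in W\mid \p'\subseteq\q\}$ via the finiteness of $\operatorname{ht}(\q)$ in a noetherian ring (and rightly note that a naive Zorn argument stumbles because $W$ is only closed upward), a point the paper's proof simply takes for granted.
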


\begin{proof}
Since  $W$  is closed under specialization, a prime ideal $\q$ belongs to $W$  if and only if  $\q$ contains a prime $\p$ in $\Min (W)$. 
This proves the equality  $W = \bigcup _{\p \in \Min (W)} V(\p)$.  

If  $W = V(I)$  for an ideal $I$ of $R$, then $\Min (W)$ is just a set of minimal prime ideals of  $I$, which is known to be a finite set. 
Conversely, if $\Min (W)$  is a finite set $\{ \p_1, \ldots, \p_n\}$, then we have  $W = V(\p_1) \cup \cdots \cup V(\p_n) = V(\p_1 \cap \cdots \cap \p_n)$, which is a closed subset of  $\Spec (R)$. 
\end{proof}

Now we characterize $\G_I$  as elements of  $\Sec (R)$.

\begin{thm}\label{ACC}
The following conditions are equivalent for  $\gamma \in \Sec (R)$. 
\begin{enumerate}
\item[{(\rm 1)}]\ $\gamma = \G_{I}$ for an ideal $I$ of $R$.
\item[{(\rm 2)}]\ 
$\gamma$  satisfies the ascending chain condition in the following sense: 
If there is an ascending chain of left exact radical functors 
$$
\gamma _{1} \subseteq \gamma _{2} \subseteq \cdots \subseteq \gamma _{n} \subseteq \cdots \subseteq \gamma 
$$
with $\bigcup _{n}\gamma _{n}=\gamma$, then there is an integer $N>0$ such that $\gamma _{N} = \gamma _{N+1} = \cdots = \gamma$.
\item[{(\rm 3)}]\ 
If there is an ascending chain of preradical functor
$$
\gamma ' _{1} \subseteq \gamma ' _{2} \subseteq \cdots \subseteq \gamma ' _{n} \subseteq \cdots \subseteq \gamma 
$$
with $\overline{\bigcup _{n} \gamma ' _{n}}= \gamma$, then there is an integer $N>0$ such that $\overline{\gamma ' _{N}} = \overline{\gamma ' _{N+1}} = \cdots = \gamma$.
\end{enumerate}
\end{thm}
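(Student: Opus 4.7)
The plan is to transport everything to the lattice $\sp(R)$ of specialization-closed subsets via the isomorphism of Corollary~\ref{bijection}, which identifies $\gamma \in \Sec(R)$ with $W_\gamma \subseteq \Spec(R)$ and $\G_I$ with $V(I)$, then to exploit Lemma~\ref{closed}: the set $W$ is closed in $\Spec(R)$ if and only if $\Min(W)$ is finite, and always $W = \bigcup_{\p \in \Min(W)} V(\p)$.

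For $(1) \Rightarrow (2)$, assume $\gamma = \G_I$. Given an ascending chain $\gamma_1 \subseteq \gamma_2 \subseteq \cdots \subseteq \gamma$ in $\Sec(R)$ with $\bigcup_n \gamma_n = \gamma$, the corresponding chain $W_n := W_{\gamma_n} \subseteq V(I)$ satisfies $\bigcup_n W_n = V(I)$. The minimal primes $\p_1, \ldots, \p_r$ of $I$ are finite in number, and each $\p_j$ lies in some $W_{n_j}$; setting $N := \max_j n_j$ yields $\{\p_1, \ldots, \p_r\} \subseteq W_N$. Since $W_N$ is specialization-closed, it contains $V(\p_j)$ for every $j$, so $W_N \supseteq V(I)$ by Lemma~\ref{closed}, and the chain stabilizes at stage $N$.

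For $(2) \Rightarrow (1)$ I would argue by contraposition: suppose $\gamma = \G_W$ with $W$ not closed, so $\Min(W)$ is infinite by Lemma~\ref{closed}. Pick a sequence of pairwise distinct elements $\p_1, \p_2, \ldots \in \Min(W)$ and define
\[
W_n := W \setminus \{\p_i : i > n\}.
\]
The crucial verification is that each $W_n$ is specialization-closed. If $\q \in W_n$ and $\q \subseteq \p_m$ for some $m > n$, then $\q$ contains some $\p_\lambda \in \Min(W)$, and the inclusion $\p_\lambda \subseteq \q \subseteq \p_m$ together with the minimality of both $\p_\lambda$ and $\p_m$ in $W$ forces $\p_\lambda = \q = \p_m$, contradicting $\q \in W_n$. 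It is then immediate that $W_n \subsetneq W_{n+1}$ (witnessed by $\p_{n+1}$) and $\bigcup_n W_n = W$. Passing back through Corollary~\ref{bijection} yields a chain $\G_{W_n}$ in $\Sec(R)$ that refutes (2).

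For $(2) \Leftrightarrow (3)$: the direction $(3) \Rightarrow (2)$ is immediate, since any chain in $\Sec(R)$ is a chain of preradicals equal to their own closures with $\overline{\bigcup_n \gamma_n} = \bigcup_n \gamma_n = \gamma$. For $(2) \Rightarrow (3)$, given preradicals $\gamma'_1 \subseteq \gamma'_2 \subseteq \cdots$ with $\overline{\bigcup_n \gamma'_n} = \gamma$, I would pass to closures, obtaining an ascending chain $\overline{\gamma'_1} \subseteq \overline{\gamma'_2} \subseteq \cdots$ in $\Sec(R)$; the sandwich inequalities $\gamma'_n \subseteq \overline{\gamma'_n} \subseteq \bigcup_n \overline{\gamma'_n} \subseteq \overline{\gamma} = \gamma$ together with the minimality of closure show that $\bigcup_n \overline{\gamma'_n} = \gamma$ in $\Sec(R)$, so (2) applied to this chain gives $\overline{\gamma'_N} = \gamma$ for some $N$. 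The main obstacle I anticipate is the verification that the sets $W_n$ in the contrapositive construction are specialization-closed: the minimality of each $\p_i$ in $W$ is exactly what rescues the argument, and the construction would fail if one tried to remove non-minimal primes.
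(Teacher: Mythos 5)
Your proposal is correct and follows essentially the same route as the paper: transport to $\sp(R)$ via Corollary \ref{bijection}, use finiteness of $\Min(V(I))$ for $(1)\Rightarrow(2)$, argue $(2)\Rightarrow(1)$ by contraposition from an infinite set of minimal primes, and handle $(2)\Leftrightarrow(3)$ by passing to closures with the same sandwich argument. The only (cosmetic) difference is your chain $W_n = W\setminus\{\p_i : i>n\}$ in place of the paper's $W_n = W'\cup V(\p_1)\cup\cdots\cup V(\p_n)$; these are different subsets in general, but your verification that each $W_n$ is specialization-closed is valid and both constructions yield the required strictly ascending chain.
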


\begin{proof}
({\rm 1}) $\Rightarrow$ ({\rm 2}).    
Suppose that $\G=\G_{I}$. 
Note from Corollary \ref{bijection} that we have 
$$
W_{\gamma_1} \subseteq W_{\gamma _2} \subseteq \cdots \subseteq W_{\gamma _n} \subseteq \cdots \subseteq V(I)
$$ 
such that  $\bigcup _{n} W_{\gamma _n} = V(I)$. 
Since  $\Min (V(I))$  is a finite set, we can take an enough large integer $N > 0$  so that  $W_{\gamma _N}$  contains all such prime ideals in  $\Min (V(I))$. Then $W_{\gamma  _N} = W_{\gamma  _{N+1}} = \cdots = V(I)$, hence $\gamma _{N} = \gamma _{N+1} = \cdots = \gamma$.

\noindent 
({\rm 2}) $\Rightarrow$ ({\rm 1}). 
By Lemma \ref{closed} it is sufficient to show that  $\Min (W_{\gamma})$  is a finite set. 
Contrarily, assume that $\Min(W_{\gamma})$ is an infinite set. 
Then, we can choose infinitely many distinct prime ideals $\p_{1}, \p_{2}, \ldots, \p_n, \ldots$ in $\Min(W_{\gamma})$. 
Set $W^{\prime} = \bigcup_{\p\in \Min(W) \setminus \{ \p_{i} \mid i \in \N \}} V(\p)$, $W_{n}=W^{\prime}\cup V(\p_{1}) \cup V(\p_{2}) \cup \cdots \cup V(\p_{n})$ and $\gamma _{n} = \G_{W_n}$ for each $n \in \N$. 
Note that  $W_{\gamma} = \bigcup _n W_n$  and  $W_n \subsetneq W_{n+1}$  for each $n >0$. 
Then there is an ascending chain of left exact radical functors 
$$\gamma _{1}\subseteq \gamma _{2}\subseteq \cdots \subseteq \gamma _{n}\subseteq \cdots \subseteq \gamma $$
with $\bigcup _{n}\gamma _{n}=\gamma$. 
From the condition (2), there is an integer $N>0$ such that $\gamma _{N}=\gamma _{N+1}=\cdots=\gamma$.
Thus we have $W_N= W_{N+1} = \cdots = W_{\gamma}$. 
But this is a contradiction. 
Therefore $\Min(W_{\gamma})$ is a finite set. 

\noindent
({\rm 2}) $\Rightarrow$ ({\rm 3}).   
Note that if $\gamma ' _{1}$ and $\gamma ' _{2}$ are preradical functors and if   $\gamma ' _{1} \subseteq \gamma ' _{2}$, then $\bar{\gamma ' _{1}}\subseteq \bar{\gamma ' _{2}}$. 
Suppose that there is an ascending chain of preradical functor; 
$$
\gamma ' _{1} \subseteq \gamma ' _{2} \subseteq \cdots \subseteq \gamma '_{n} \subseteq \cdots \subseteq \gamma 
$$
with $\overline{\bigcup _{n} \gamma ' _{n}} = \gamma$. 
Then we have an ascending chain of left exact radical functors 
$$
\overline{\gamma ' _{1}} \subseteq \overline{\gamma ' _{2}} \subseteq \cdots \subseteq \overline{\gamma ' _{n}} \subseteq \cdots \subseteq \overline{\gamma}=\gamma.
$$
Since  $\gamma ' _{n} \subseteq \overline{\gamma ' _{n}}$ for each $n$, 
we have  $\bigcup _{n} \gamma '_{n} \subseteq \bigcup _{n} \overline{\gamma '_{n}}$. 
Hence  $\gamma = \overline{\bigcup _{n} \gamma '_{n}} \subseteq \overline{\bigcup _n  \overline{\gamma '_{n}}} \subseteq \overline{\gamma }=\gamma$. 
Here we should notice that, since $\bigcup _n \overline{\gamma ' _{n}} \in \Sec (R)$,  we have  $\gamma = \overline{\bigcup _n \overline{\gamma ' _{n}}} = \bigcup _n \overline{\gamma ' _{n}}$. 
Then, from the condition $(\rm 2)$, there is an integer $N>0$ such that $\overline{\gamma ' _{N}} = \overline{\gamma ' _{N+1}} = \cdots = \gamma$.

\noindent
The implication ({\rm 3}) $\Rightarrow$ ({\rm 2}) is clear. 
\end{proof}

By virtue of Theorem \ref{ALCbijection} we can state the same theorem in terms of abstract local cohomology functors. 

\begin{thm}\label{ACC for ALC}
The following conditions are equivalent for an abstract local cohomology functor  $\delta$  on  $\D^+(\RMod)$. 
\begin{enumerate}
\item[{(\rm 1)}]\ $\delta \cong \R\G_{I}$ for an ideal $I$ of $R$.
\item[{(\rm 2)}]\ 
$\delta$  satisfies the ascending chain condition in the following sense: 
If there is an ascending chain in  $\ALC (R)$ 
$$
[\delta _{1}] \subseteq [\delta _{2}] \subseteq \cdots \subseteq [\delta _{n}] \subseteq \cdots \subseteq [\delta] 
$$
with $\bigcup _{n} [\delta _{n}] = [\delta]$, then there is an integer $N>0$ such that $[\delta_{N}] = [\delta _{N+1}] = \cdots = [\delta]$.
\end{enumerate}
\end{thm}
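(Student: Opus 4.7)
The plan is to reduce Theorem \ref{ACC for ALC} directly to Theorem \ref{ACC} via the lattice isomorphism established in Theorem \ref{ALCbijection}. The point is that Theorem \ref{ALCbijection} asserts that the assignment $\gamma \mapsto [\R\gamma]$ defines an isomorphism of complete lattices $\Sec(R) \xrightarrow{\sim} \ALC(R)$, under which $\G_I \in \Sec(R)$ corresponds to $[\R\G_I] \in \ALC(R)$. Hence every property of elements of $\ALC(R)$ that is phrased purely in terms of the lattice structure (inclusion and arbitrary unions) transports across the isomorphism to the analogous property in $\Sec(R)$.

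Concretely, I would argue as follows. Given an abstract local cohomology functor $\delta$, by Theorem \ref{thm-ALC} (or equivalently by the surjectivity in Corollary \ref{bijection} and Theorem \ref{ALCbijection}) there exists $\gamma \in \Sec(R)$, unique up to equality, such that $\delta \cong \R\gamma$. The characterization (1) that $\delta \cong \R\G_I$ for some ideal $I$ is then equivalent, through the injectivity of the bijection, to $\gamma = \G_I$ in $\Sec(R)$, i.e., to condition (1) of Theorem \ref{ACC} applied to $\gamma$.

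For condition (2), suppose we are given an ascending chain $[\delta_1] \subseteq [\delta_2] \subseteq \cdots \subseteq [\delta]$ in $\ALC(R)$ with $\bigcup_n [\delta_n] = [\delta]$. Writing $\delta_n \cong \R\gamma_n$ with $\gamma_n \in \Sec(R)$, the lattice isomorphism of Theorem \ref{ALCbijection} gives $\gamma_1 \subseteq \gamma_2 \subseteq \cdots \subseteq \gamma$ and $\bigcup_n \gamma_n = \gamma$ in $\Sec(R)$ (using the explicit formula $\bigcup_n [\R\gamma_n] = [\R(\bigcup_n \gamma_n)]$ recorded right before Theorem \ref{ALCbijection}). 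Conversely, every such ascending chain in $\Sec(R)$ with union $\gamma$ arises this way by applying $\R(-)$. Thus condition (2) for $\delta$ in $\ALC(R)$ is equivalent, term by term, to condition (2) of Theorem \ref{ACC} for $\gamma$ in $\Sec(R)$. Applying Theorem \ref{ACC} to $\gamma$, the two conditions match, and we conclude.

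The only genuine content beyond a formal translation is verifying that the bijection really preserves ascending unions in the precise sense needed; this is exactly what the complete lattice isomorphism statement of Theorem \ref{ALCbijection} provides, so there is no real obstacle. Accordingly, the main expository task is to write the two directions cleanly, each of which is an immediate corollary of Theorem \ref{ACC} combined with Theorem \ref{ALCbijection}.
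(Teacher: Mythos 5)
Your reduction via the lattice isomorphism of Theorem \ref{ALCbijection} to Theorem \ref{ACC} is exactly how the paper handles this statement (it offers no separate proof, simply remarking that the theorem is Theorem \ref{ACC} restated through the isomorphism $\Sec(R)\cong\ALC(R)$). Your write-up is correct and just makes the translation explicit.
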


\vspace{6pt}

To characterize  the functor  $\G_{I,J}$ for pairs of ideals $I$ and $J$, 
we prepare the following lemma.

\begin{lem}\label{W(I,J)}
Let  $I$ and $J$ be ideals of $R$. 
Then, $W(I, J)$  is the largest specialization closed subset $W$ of $\Spec (R)$ which satisfies $W \cap V(J) = V(I+J)$. 
\end{lem}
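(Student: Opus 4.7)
The plan is to prove the claim in two parts: first verify directly that $W(I,J)$ itself satisfies the equation $W(I,J)\cap V(J) = V(I+J)$, and then show that any specialization-closed subset $W$ satisfying this equation must be contained in $W(I,J)$.

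For the first part, I would unwind the definitions. If $\p \in W(I,J) \cap V(J)$, then $J\subseteq \p$ and $I^n \subseteq \p+J = \p$ for some $n>0$, so $I \subseteq \p$, hence $I+J\subseteq \p$. Conversely, if $\p \in V(I+J)$, then trivially $J\subseteq \p$ and $I \subseteq \p = \p+J$, so $\p \in W(I,J)\cap V(J)$. This direction is essentially formal.

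For the second (maximality) part, let $W$ be any specialization-closed subset of $\Spec(R)$ with $W\cap V(J) = V(I+J)$, and pick an arbitrary $\p \in W$. The goal is to show $\p \in W(I,J)$, i.e.\ $I^n\subseteq \p+J$ for some $n>0$. The key idea is to look at the minimal primes of the ideal $\p+J$: if $\q$ is such a minimal prime, then $\q \supseteq \p$, so by specialization-closedness $\q \in W$; moreover $\q \supseteq J$, so $\q \in W\cap V(J) = V(I+J)$, which forces $I\subseteq \q$. Intersecting over all minimal primes over $\p+J$ gives $I \subseteq \sqrt{\p+J}$, and since $R$ is noetherian $I$ is finitely generated, so a sufficiently large power $I^n$ lies in $\p+J$. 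Thus $\p \in W(I,J)$, as desired.

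The only mildly technical step is the minimal-primes argument in the second part, but because $R$ is noetherian both the finiteness of minimal primes over $\p+J$ and the step from $I\subseteq \sqrt{\p+J}$ to $I^n\subseteq \p+J$ are standard. I do not expect any serious obstacle; the content of the lemma is really that the defining condition of $W(I,J)$ is precisely the most permissive way to include a prime $\p$ while still keeping $W\cap V(J)$ inside $V(I+J)$.
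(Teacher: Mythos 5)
Your proposal is correct and follows essentially the same route as the paper: both arguments reduce the maximality claim to showing that for $\p$ in a valid $W$ one has $I \subseteq \sqrt{\p+J}$ (you via minimal primes of $\p+J$, the paper via the inclusion $V(\p+J)\subseteq V(I+J)$), and then invoke noetherianness to get $I^n \subseteq \p+J$. Your organization — directly checking $W(I,J)\cap V(J)=V(I+J)$ and then proving containment of any competitor — is a harmless repackaging of the paper's "union of all valid subsets" formulation.
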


\begin{proof}
Setting  
$$
W' = \bigcup \{ W \subseteq \Spec (R) \mid \text{$W$  is specialization closed and} \ W \cap V(J) = V(I+J)\},
$$ 
we can see that $W'$  is also closed under specialization, and clearly it is the largest among such subsets. 
To prove the lemma we show $W' = W(I, J)$. 

If $\p \in W'$, then there is a specialization closed subset  $W$  containing  $\p$  such that $W \cap V(J) = V(I+J)$. 
Then, since  $V(\p) \subseteq W$, we see $V(\p + J) = V(\p) \cap V(J) \subseteq V(I+J)$, hence  $(I+J)^n \subseteq \p +J$  for a large integer  $n$.   
In particular, $I^n \subseteq \p + J$ and thus $\p \in W(I, J)$. 

Conversely, if $\p \in W(I, J)$, then it can be seen that  $V(\p) \cap V(J) \subseteq V(I+J)$. 
Therefore a closed subset  $W = V(\p) \cup V(I)$ satisfies  $W \cap V(J) = V(I+J)$, hence  $\p \in W \subseteq W'$. 
\end{proof}

\begin{thm}\label{thm-W(I,J)}
The following conditions are equivalent for $\gamma \in \Sec (R)$. 
\begin{enumerate}
\item[{(\rm 1)}]\ 
$\gamma = \G_{I,J}$ for a pair of ideals $I$, $J$  of $R$.
\item[{(\rm 2)}]\ 
$\gamma = \gamma _1/\gamma _2$ for left exact radical functors $\gamma _1 \subseteq \gamma _2$, the both of which satisfy the ascending chain condition in Theorem \ref{ACC}. 
\end{enumerate}
\end{thm}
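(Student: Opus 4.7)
The plan is to translate the whole statement into the language of specialization-closed subsets via Corollary \ref{bijection}, use Theorem \ref{ACC} to identify the functors satisfying the ascending chain condition with the functors of the form $\G_K$ for an ideal $K$, and use Lemma \ref{W(I,J)} to match the quotient construction with $W(I,J)$.

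For the implication $(1) \Rightarrow (2)$, given $\gamma = \G_{I,J}$, I will try the obvious candidate $\gamma_1 = \G_{I+J}$ and $\gamma_2 = \G_{J}$. Since $V(I+J) \subseteq V(J)$ we have $\gamma_1 \subseteq \gamma_2$, and both satisfy the ACC by Theorem \ref{ACC}. It remains to verify that $\gamma_1/\gamma_2 = \G_{I,J}$. By Definition \ref{def-quotient} and Lemma \ref{quotient}, $\gamma_1/\gamma_2$ corresponds under Corollary \ref{bijection} to the largest specialization-closed $W$ with $W \cap V(J) = V(I+J)$, and Lemma \ref{W(I,J)} identifies that subset with $W(I,J)$.

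For the reverse implication $(2) \Rightarrow (1)$, suppose $\gamma = \gamma_1/\gamma_2$ with $\gamma_1 \subseteq \gamma_2$ both satisfying the ACC. By Theorem \ref{ACC}, there are ideals $K_1, K_2$ of $R$ such that $\gamma_1 = \G_{K_1}$ and $\gamma_2 = \G_{K_2}$. The inclusion $\gamma_1 \subseteq \gamma_2$ translates to $V(K_1) \subseteq V(K_2)$, hence $K_2 \subseteq \sqrt{K_1}$, which yields $V(K_1 + K_2) = V(K_1)$. Again by Lemma \ref{quotient}, $\gamma_1/\gamma_2$ corresponds to the largest specialization-closed subset $W$ with $W \cap V(K_2) = V(K_1)$; applying Lemma \ref{W(I,J)} with $I = K_1$ and $J = K_2$ (using the identification $V(K_1) = V(I+J)$ just established), this maximal subset is precisely $W(K_1, K_2)$, and thus $\gamma = \G_{W(K_1,K_2)} = \G_{K_1,K_2}$.

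The proof is essentially a bookkeeping exercise once the dictionary between $\Sec(R)$, $\sp(R)$, quotients, and the sets $W(I,J)$ is in place; the only mildly delicate point is the $(2) \Rightarrow (1)$ direction, where one has to remember to replace $K_1$ by $K_1 + K_2$ (or equivalently observe $V(K_1+K_2)=V(K_1)$) before applying Lemma \ref{W(I,J)}, since that lemma is phrased in terms of $V(I+J)$ rather than $V(I)$.
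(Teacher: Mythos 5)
Your proposal is correct and follows essentially the same route as the paper's proof: for $(1)\Rightarrow(2)$ the same candidates $\gamma_1=\G_{I+J}$, $\gamma_2=\G_J$ are used, and both directions rest on the identification (via Corollary \ref{bijection} and Lemma \ref{quotient}) of the quotient with the largest specialization-closed $W$ satisfying $W\cap V(J)=V(I+J)$, which Lemma \ref{W(I,J)} pins down as $W(I,J)$. The only cosmetic difference is that the paper verifies the two inclusions $\G_{I,J}\subseteq\gamma$ and $\gamma\subseteq\G_{I,J}$ separately in the direction $(2)\Rightarrow(1)$, whereas you invoke the uniqueness of the "largest such $W$" directly; both are valid.
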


\begin{proof}
({\rm 1}) $\Rightarrow$ ({\rm 2}) :   
Assume that  $\gamma = \G_{I, J}$, and set  $\gamma _1 = \G_{I+J}$, $\gamma _2 = \G_{J}$. 
Since $W(I,J)\cap V(J)=V(I+J)$, it follows from Corollary \ref{bijection} that  $\gamma \cdot \gamma _2  = \gamma _1$, hence that $\gamma \in S_{\gamma _1, \gamma _2}$, where we use the notation in Lemma \ref{quotient}.
Then Lemma \ref{W(I,J)} forces that $\gamma = \G_{I,J}$  is the maximal element of  $S_{\gamma _1, \gamma _2}$, thus we have  $\gamma = \gamma _1 /\gamma _2$.

\noindent
({\rm 2}) $\Rightarrow$ ({\rm 1}) :  
Suppose $\gamma = \gamma _1/\gamma _2$ where $\gamma _1 \subseteq \gamma _2$ satisfy the ascending chain condition. 
By virtue of Theorem \ref{ACC}, we may write $\gamma _1 = \G_I$ and $\gamma _2 =\G_J$ for some ideals $I$ and $J$. 
Note that, since $\gamma _1 \subseteq \gamma _2$, we must have  $V(I) \subseteq V(J)$. 
Thus  $W(I,J) \cap V(J) = V(I+J)=V(I)$ holds, and hence $\G_{I,J}$  is an element of $S_{\gamma _1, \gamma _2}$. 
It then follows from the definition of quotients that $\G_{I,J} \subseteq \gamma$. 
On the other hand, since $\gamma \in S_{\gamma _1, \gamma _2}$, we have $\gamma \cdot \gamma _2 = \gamma _1$, hence  $W_{\gamma} \cap V(J) = V(I)$. 
Then we see from Lemma \ref{W(I,J)} that $W_{\gamma} \subseteq W(I,J)$. 
Thus $\gamma \subseteq \G_{I,J}$.  
\end{proof}

We can state the same theorem in terms of  $\ALC (R)$. 

\begin{thm}\label{thm-W(I,J) for ALC}
The following conditions are equivalent for an abstract local cohomology functor $\delta$. 
\begin{enumerate}
\item[{(\rm 1)}]\ 
$\delta \cong \R\G_{I,J}$ for a pair of ideals $I$, $J$  of $R$.
\item[{(\rm 2)}]\ 
$\delta \cong \delta _1/\delta _2$ for abstract local cohomology functors  $\delta _1 \subseteq \delta _2$, the both of which satisfy the ascending chain condition in Theorem \ref{ACC for ALC}. 
\end{enumerate}
\end{thm}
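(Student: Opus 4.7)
The plan is to reduce the statement entirely to Theorem \ref{thm-W(I,J)} via the lattice isomorphism $\Sec(R) \cong \ALC(R)$ supplied by Theorem \ref{ALCbijection}. Since every abstract local cohomology functor is of the form $\R\gamma$ for a unique $\gamma \in \Sec(R)$ (by Theorem \ref{thm-ALC}, injectivity of $\gamma \mapsto [\R\gamma]$, and the surjectivity noted after Corollary \ref{bijection}), I may write $\delta \cong \R\gamma$ and translate each side of the desired equivalence into an equivalent statement about $\gamma$.

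First I would verify that the ``$\G_{I,J}$-translation'' of condition (1) is faithful: $\delta \cong \R\G_{I,J}$ holds for some pair $I,J$ if and only if $\gamma = \G_{I,J}$ for some pair $I,J$. This is immediate from the injectivity of $\Sec(R) \to \ALC(R)$, because both functors lie in $\Sec(R)$.

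Next I would translate condition (2). Suppose $\delta \cong \delta_1/\delta_2$ with $\delta_i$ abstract local cohomology functors satisfying $[\delta_1] \subseteq [\delta_2]$. By Theorem \ref{ALCbijection} there exist $\gamma_1,\gamma_2 \in \Sec(R)$ with $\delta_i \cong \R\gamma_i$ and $\gamma_1 \subseteq \gamma_2$; by Definition \ref{def-quotientALC} the quotient $\delta_1/\delta_2$ is precisely $\R(\gamma_1/\gamma_2)$, so $\gamma \cong \gamma_1/\gamma_2$ in $\Sec(R)$. Conversely, any such decomposition $\gamma = \gamma_1/\gamma_2$ in $\Sec(R)$ lifts to $\delta \cong \R\gamma_1 / \R\gamma_2$ in $\ALC(R)$. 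Thus condition (2) for $\delta$ is equivalent to the analogous quotient decomposition for $\gamma$, provided the ascending chain conditions also match up.

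The remaining point is that the ACC in Theorem \ref{ACC for ALC} for $\delta_i$ is the same as the ACC in Theorem \ref{ACC} for $\gamma_i$. This is exactly the content of Theorem \ref{ACC for ALC} together with the fact that ascending unions in $\ALC(R)$ correspond to ascending unions in $\Sec(R)$ under the lattice isomorphism, as noted after Definition \ref{Sec} (namely $\bigcup_n [\R\gamma_n] = [\R(\bigcup_n \gamma_n)]$). Once both sides are translated, the equivalence $(1) \Leftrightarrow (2)$ in the present theorem becomes literally the equivalence $(1) \Leftrightarrow (2)$ of Theorem \ref{thm-W(I,J)} applied to $\gamma$, which completes the proof.

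In short, there is no new obstacle: the only step requiring care is ensuring that the quotient operation and the ACC are transported correctly through the lattice isomorphism, which is exactly what Theorem \ref{ALCbijection}, Definition \ref{def-quotientALC}, and Theorem \ref{ACC for ALC} have already been arranged to guarantee.
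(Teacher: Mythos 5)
Your proposal is correct and coincides with the paper's intent: the paper states this theorem without separate proof precisely because it is the image of Theorem \ref{thm-W(I,J)} under the lattice isomorphism of Theorem \ref{ALCbijection}, with Definition \ref{def-quotientALC} and Theorem \ref{ACC for ALC} ensuring that quotients and the ascending chain condition transport correctly. Your write-up simply makes that translation explicit.
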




\end{document}